\numberwithin{equation}{section}
\theoremstyle{plain}
\newtheorem{theorem}[equation]{Theorem}
\newtheorem{proposition}[equation]{Proposition}
\newtheorem{lemma}[equation]{Lemma}
\newtheorem{corollary}[equation]{Corollary}
\theoremstyle{definition}
\newtheorem{remark}[equation]{Remark}
\newtheorem{example}[equation]{Example}
\newcommand\R{\mathbb{R}}
\newcommand\Z{\mathbb{Z}}
\newcommand\N{\mathbb{N}}
\newcommand\Q{\mathbb{Q}}
\newcommand\eps{\varepsilon}
\newcommand\normsymb{\ell}  
\newcommand\norm[1]{\ell({#1})}   
\newcommand\normn[1]{\ell_n({#1})}   
\newcommand\free[1]{\mathbf{F}_{#1}}
\newcommand{\ab}{\mathrm{ab}}
\begin{document}

\title{Homogeneous length functions on groups}

\author{D.H.J. Polymath}
\address{\url{http://michaelnielsen.org/polymath1/index.php}}

\begin{abstract}
A pseudo-length function defined on an arbitrary group $G = (G,\cdot,e,
(\,)^{-1})$ is a map $\normsymb: G \to [0,+\infty)$ obeying $\norm{e}=0$,
the symmetry property $\norm{x^{-1}} = \norm{x}$, and the triangle
inequality $\norm{xy} \leq \norm{x} + \norm{y}$ for all $x,y \in G$. We
consider pseudo-length functions which saturate the triangle inequality
whenever $x=y$, or equivalently those that are \emph{homogeneous} in the
sense that $\norm{x^n} = n\,\norm{x}$ for all $n\in\N$. We show that this
implies that $\norm{[x,y]}=0$ for all $x,y \in G$. This leads to a
classification of such pseudo-length functions as pullbacks from
embeddings into a Banach space.  We also obtain a quantitative version of
our main result which allows for defects in the triangle inequality or
the homogeneity property.
\end{abstract}

\subjclass[2010]{20F12, 20F65}

\keywords{homogeneous length function,
pseudo-length function,
quasi-morphism,
Banach space embedding}

\maketitle

\section{Introduction}

Let $G = (G, \cdot, e, (\,)^{-1})$ be a group (written multiplicatively,
with identity element $e$).  A \emph{pseudo-length function} on $G$ is a
map $\normsymb : G \to [0,+\infty)$ that obeys the properties
\begin{itemize}
\item $\norm{e} = 0$,
\item $\norm{ x^{-1} } = \norm{x}$,
\item $\norm{ x y } \leq \norm{x} + \norm{ y }$
\end{itemize}
for all $x,y\in G$.  If in addition we have $\norm{x}>0$ for all $x \in G
\setminus \{e\}$, we say that $\normsymb$ is a \emph{length function}. By
setting $d(x,y) \coloneqq \norm{x^{-1}y}$, it is easy to see that
pseudo-length functions (resp. length functions) are in bijection with
left-invariant pseudometrics (resp. left-invariant metrics) on $G$.

From the above properties it is clear that one has the upper bound
\[
\norm{x^n} \leq |n| \, \norm{x}
\]
for all $x \in G$ and $n \in \Z$.  Let us say that a pseudo-length
function $\normsymb: G\to [0,+\infty)$ is \emph{homogeneous} if equality
is always attained here, in that one has
\begin{equation}\label{linear-growth}
\norm{x^n} = |n| \, \norm{x}
\end{equation}
for all $x \in G$ and any $n \in \Z$.  Using the axioms of a
pseudo-length function, it is not difficult to show that the homogeneity
condition \eqref{linear-growth} is equivalent to the triangle inequality
holding with equality whenever $x=y$ (i.e., that \eqref{linear-growth}
holds for $n=2$); see \cite[Lemma 1]{GK}.

If one has a real or
complex Banach space $\mathbb{B} = (\mathbb{B},\| \, \|)$, and $\phi: G
\to \mathbb{B}$ is any homomorphism from $G$ to $\mathbb{B}$ (viewing the
latter as a group in additive notation), then the function $\normsymb: G
\to [0,+\infty)$ defined by $\norm{x} \coloneqq \| \phi(x) \|$ is easily
verified to be a homogeneous pseudo-length function.  Furthermore, if
$\phi$ is injective, then $\normsymb$ is in fact a
homogeneous length function.  For instance, the function
$\norm{ (n, m) } \coloneqq |n + \sqrt{2} m|$
is a length function on $\Z^2$, where in this case $\mathbb{B} \coloneqq
\R$ and $\phi((n,m)) \coloneqq n  + \sqrt{2} m$.  On the other hand, one
can easily locate many length functions that are not homogeneous, for
instance by taking the square root of the length function just
constructed.

The main result of this paper is that such Banach space constructions are
in fact the \emph{only} way to generate homogeneous (pseudo-)length
functions.

\begin{theorem}[Classification of homogeneous length functions]\label{class}
Given a group $G$, let $\normsymb: G \to [0,+\infty)$ be a homogeneous
pseudo-length function.  Then there exist a real Banach space $\mathbb{B}
= (\mathbb{B},\| \, \|)$ and a group homomorphism $\phi : G \to
\mathbb{B}$ such that $\norm{x} = \| \phi(x) \|$ for all $x \in G$.
Furthermore, if $\normsymb$ is a length function, one can take $\phi$ to
be injective, i.e., an isometric embedding.
\end{theorem}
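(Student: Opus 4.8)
The plan is to separate the proof into an ``analytic'' core --- that $\normsymb$ annihilates every commutator --- and a ``linearize and complete'' step converting this into the desired Banach-space model; I expect essentially all the difficulty to sit in the first part. Before either step I would record two elementary consequences of homogeneity. First, $\normsymb$ is conjugation invariant: applying \eqref{linear-growth} to $gxg^{-1}$ and using $\norm{gx^ng^{-1}}\le 2\norm{g}+n\norm{x}$ gives $n\,\norm{gxg^{-1}}\le 2\norm{g}+n\norm{x}$, whence $\norm{gxg^{-1}}\le\norm{x}$ on letting $n\to\infty$; applying this in turn to $gxg^{-1}$ and $g^{-1}$ yields the reverse inequality. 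Second, $\norm{[g,h]}\le 2\min(\norm{g},\norm{h})$, since by conjugation invariance $\norm{[g,h]}=\norm{g^{-1}[g,h]g}=\norm{h\,(g^{-1}h^{-1}g)}\le 2\norm{h}$, and symmetrically.

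The crux is then the commutator estimate $\norm{[x,y]}=0$, and here \emph{I expect the main obstacle to lie}. The mechanism I would look for is a bound $\norm{[x,y]}\le C(\norm{x},\norm{y})/n$ valid for every $n\in\N$, which forces $\norm{[x,y]}=0$; this is transparent when $G$ is $2$-step nilpotent, for then $[x^n,y]=[x,y]^n$ is central and homogeneity together with the estimate above give $n\,\norm{[x,y]}=\norm{[x^n,y]}\le 2\norm{y}$. In a general group $[x^n,y]$ is merely a product of $n$ conjugates of $[x,y]$ rather than the honest power $[x,y]^n$, so the real work is to find an identity or an iteration that still extracts a linear-in-$n$ lower bound for some suitable power commutator while keeping its $\normsymb$-value $O(1)$; granting the resulting statement $\norm{[x,y]}=0$ I would proceed.

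Granting $\norm{[x,y]}=0$, the remainder is soft. The zero set $N:=\normsymb^{-1}(0)$ is a subgroup (triangle inequality and symmetry), is normal (conjugation invariance), and contains every commutator, so $[G,G]\subseteq N$ and $A:=G/N$ is abelian; moreover $\normsymb$ descends to a genuine homogeneous \emph{length} function on $A$, still denoted $\normsymb$, since its zero set on $A$ is now trivial. Homogeneity forces $A$ to be torsion free, so the canonical map $A\to V:=A\otimes_{\Z}\Q$ (with $V$ written additively) is injective; I would extend $\normsymb$ to a norm $\|\cdot\|$ on the $\Q$-vector space $V$ by setting $\|v\|:=n^{-1}\norm{a}$ whenever $nv$ equals the image of $a\in A$, checking well-definedness, $\Q$-homogeneity, the triangle inequality, and positivity from the corresponding properties of $\normsymb$ on $A$. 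Finally let $\mathbb{B}$ be the completion of $(V,\|\cdot\|)$: scalar multiplication extends from $\Q$ to $\R$ by continuity (using $\Q$-homogeneity), so $\mathbb{B}$ is a real Banach space and $V\hookrightarrow\mathbb{B}$ is an isometric embedding. The composite homomorphism $\phi\colon G\twoheadrightarrow A\hookrightarrow V\hookrightarrow\mathbb{B}$ then satisfies $\|\phi(x)\|=\norm{x}$ for all $x\in G$; and if $\normsymb$ is a length function then $N=\{e\}$, so $G=A$ is torsion-free abelian, the entire chain is injective, and $\phi$ is an isometric embedding.
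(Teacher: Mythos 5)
Your reduction of the theorem to the single identity $\norm{[x,y]}=0$, and the subsequent linearization step (quotient by the zero set, tensor with $\Q$, extend the seminorm, complete to a Banach space), match the paper's own route and are correct as written; the preliminaries (conjugation invariance, $\norm{[g,h]}\le 2\min(\norm{g},\norm{h})$) are likewise obtained in the paper in essentially the same way. But the commutator vanishing is exactly where you stop, and it is the entire content of the paper's Proposition~\ref{main}; as it stands your proposal has a genuine gap at its core. Your heuristic --- find a quantity that homogeneity forces to grow like $n\,\norm{[x,y]}$ while the axioms keep it $O(1)$ or $o(n)$ --- is the right shape, but no such quantity is exhibited, and the naive candidate $[x^n,y]$ fails outside the nilpotent setting exactly as you note.

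The missing ingredient in the paper is a \emph{splitting lemma}: if $x$ is conjugate to both $wy$ and $zw^{-1}$, then writing $x^{2n}$ as a product of a conjugate of $(wy)^n$ with a conjugate of $(zw^{-1})^n$, telescoping the resulting word using conjugation invariance, and invoking homogeneity $\norm{x}\le\norm{x^{2n}}/(2n)$ gives $\norm{x}\le\frac{1}{2}\left(\norm{y}+\norm{z}\right)$. Applied to $f(m,k):=\norm{x^m[x,y]^k}$ via the observation that $x^m[x,y]^k$ is conjugate to both $x\cdot(x^{m-1}[x,y]^k)$ and $(y^{-1}x^m[x,y]^{k-1}xy)\cdot x^{-1}$, this yields the sub-mean-value inequality $f(m,k)\le\frac{1}{2}\bigl(f(m-1,k)+f(m+1,k-1)\bigr)$, a supermartingale-type bound with drift $-\frac{1}{2}$ in the $k$-variable. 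Iterating it $2n$ times from $(0,n)$ lands on a mean-zero random walk, so $f(0,n)\le A\,\mathbf{E}|Y_1+\dots+Y_{2n}|\le A\sqrt{2n}$ with $A$ depending only on $x,y$, and then homogeneity gives $\norm{[x,y]}\le f(0,n)/n=O(n^{-1/2})\to 0$. This is precisely the quantitative decay in $n$ you were looking for; without it (or some substitute) the proof is incomplete.
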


We derive Theorem~\ref{class} from a more quantitative result
bounding the pseudo-length of a commutator
\begin{equation}\label{comdef}
[x,y] \coloneqq xyx^{-1}y^{-1};
\end{equation}
see Proposition~\ref{main} below.  Our arguments are elementary, relying
on directly applying the axioms of a homogeneous length function to
various carefully chosen words in $x$ and $y$, and repeatedly taking an
asymptotic limit $n \to \infty$ to dispose of error terms that arise in
the estimates obtained in this fashion.

An additional advantage of quantifying Theorem~\ref{class}
in Proposition~\ref{main} is that one can derive from the latter
proposition a ``quasified'' version of Theorem~\ref{class}. See
Theorem~\ref{quasi} below.\footnote{A different variant of
Theorem~\ref{class} involves replacing homogeneity by the assumption that
$\normsymb$ is a pseudo-length function on $G$ whose homogenization is
positive:
\[
\normsymb_{\rm hom}(g) \coloneqq \lim_{n \to \infty} \frac{\norm{g^n}}{n}
> 0, \qquad \forall g \neq e.
\]
(This was studied in \cite[Theorem 2.10(III)]{Niem} in the special case
of abelian $(G,\normsymb)$.) In this case we work with $(G,
\normsymb_{\rm hom})$ instead of $\normsymb$, to conclude that $G$ maps
into a Banach space.}

Finally, as one quick corollary of Theorem \ref{class}, we obtain the
following characterization of the groups that admit homogeneous length
functions.

\begin{corollary}\label{abel}
A group admits a homogeneous length function if and only if it is
abelian and torsion-free.
\end{corollary}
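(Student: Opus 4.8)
The plan is to deduce Corollary~\ref{abel} directly from Theorem~\ref{class}. First I would prove the forward direction. Suppose $G$ admits a homogeneous length function $\normsymb$. By Theorem~\ref{class} there is an injective homomorphism $\phi : G \to \mathbb{B}$ into a real Banach space with $\norm{x} = \|\phi(x)\|$. Since $\mathbb{B}$ is abelian as an additive group and $\phi$ is an injective homomorphism, $G$ embeds as a subgroup of an abelian group, hence is itself abelian. For torsion-freeness: if $x \in G$ satisfies $x^n = e$ for some $n \geq 1$, then $n\,\phi(x) = \phi(x^n) = \phi(e) = 0$ in $\mathbb{B}$, and since a real Banach space is a torsion-free abelian group (it is a $\mathbb{Q}$-vector space), we get $\phi(x) = 0$, so by injectivity $x = e$. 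Thus $G$ is abelian and torsion-free.

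For the converse, suppose $G$ is abelian and torsion-free. Then $G$ can be regarded as a $\Z$-module with no torsion, so $\Q \otimes_\Z G$ is a $\Q$-vector space into which $G$ embeds via $g \mapsto 1 \otimes g$ (injectivity is exactly torsion-freeness). Choosing a $\Q$-basis, we may view $\Q \otimes_\Z G$ as a subspace of a real vector space $V = \R^{(I)}$ for a suitable index set $I$, equipped with, say, the $\ell^1$-type norm $\|(v_i)_{i\in I}\| \coloneqq \sum_{i \in I}|v_i|$ on its (finitely supported) elements; completing this normed space yields a Banach space $\mathbb{B}$, and we have an injective homomorphism $\phi : G \to \mathbb{B}$. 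Then $\norm{x} \coloneqq \|\phi(x)\|$ is a homogeneous length function on $G$ by the discussion preceding Theorem~\ref{class}: it is a homogeneous pseudo-length function since $\phi$ is a homomorphism into a Banach space, and it is a genuine length function (positive off the identity) since $\phi$ is injective.

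I do not expect any serious obstacle here — the corollary is a soft consequence of the classification theorem plus the standard fact that torsion-free abelian groups embed into $\Q$-vector spaces, hence into Banach spaces. The only point requiring a little care is the construction of a concrete Banach space containing $\Q\otimes_\Z G$ in the converse direction; one could alternatively avoid building the space by hand and simply invoke that any normed $\Q$-vector space completes to a real Banach space. A cosmetic subtlety is the choice of norm on $V$: any norm works, but $\ell^1$ is convenient because restriction to $G$ is obviously finite-valued on the finitely supported vectors that are its image.
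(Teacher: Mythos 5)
Your proposal is correct and follows essentially the same route as the paper: the ``only if'' direction is read off from the injective homomorphism into a Banach space supplied by Theorem~\ref{class}, and the converse embeds the torsion-free abelian group $G$ into a rational (hence real) vector space equipped with an $\ell^1$-type norm with respect to a Hamel basis. The only cosmetic difference is that the paper works directly with $G\otimes_\Z\R$ and does not bother to complete the normed space, which is indeed unnecessary for producing the length function.
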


\subsection{Examples and approaches}

We now make some remarks to indicate the nontriviality of
Theorem~\ref{class}. Corollary \ref{abel} implies that there are no
non-abelian groups with homogeneous length functions.
Whether or not such a striking geometric rigidity phenomenon holds was
previously unknown to experts. Moreover, the corollary fails
to hold if one or more of the precise conditions in the theorem are
weakened. For instance, such length functions indeed exist (i)~on
non-abelian monoids, and (ii)~on balls of finite radius in free groups.
We explain these two cases further in Section \ref{remarks-sec}.

Given these cases, one could \textit{a priori} ask if every non-abelian
group admits a homogeneous length function. This is not hard to disprove;
here are two examples.

\begin{example}[Nilpotent groups]
If $G$ is nilpotent of nilpotency class two (e.g., the Heisenberg group),
then $[x,y]^{n^2} = [x^n,y^n]$ for all $x,y \in G$ and integers $n \geq
0$ since the map $(g,h) \mapsto [g,h]$ is now a bihomomorphism
$G \times G \to [G,G] \subset Z(G)$.
If $[x,y]$ is non-trivial, then any homogeneous length function on $G$
would assign a linearly growing quantity to the right-hand side and a
quadratically growing quantity to the left-hand side, which is absurd;
thus such groups cannot admit homogeneous length functions.  The claim
then also follows for nilpotent groups of higher nilpotency class, since
they contain subgroups of nilpotency class two.\footnote{One can also
show by relatively simple means that solvable non-abelian groups cannot
admit homogeneous length functions either; see the discussion on
lamplighter groups in the comments to
\href{https://terrytao.wordpress.com/2017/12/16/bi-invariant-metrics-of-linear-growth-on-the-free-group/}{\tt
terrytao.wordpress.com/2017/12/16/}.}
\end{example}

\begin{example}[Connected Lie groups]
As we explain in Remark~\ref{Rinv}, a homogeneous length function
$\normsymb$ induces a \textit{bi-invariant} metric on $G$.
Now if $(G, \normsymb)$ is furthermore a connected Lie group, then by
\cite[Lemma 7.5]{Milnor}, $G \cong K \times \R^n$ for some compact Lie
group $K$ and integer $n \geq 0$. By \eqref{linear-growth}, $K$ cannot
have torsion elements, hence must be trivial. But then $G$ is abelian.
\end{example}

Prior to Corollary \ref{abel}, the above examples left open the question
of whether \textit{any} non-abelian group admits a homogeneous length
function. One may as well consider groups generated by two non-commuting
elements. As a prototypical example, let $\free{2}$ be the free group on
two generators $a,b$.  The word length function on $\free{2}$ is a length
function, but it is not homogeneous, since for instance the word length
of $(bab^{-1})^n = ba^n b^{-1}$ is $n+2$, which is not a linear function
of $n$.  It is however the case that the word length of $x^n$ has linear
\emph{growth} in $n$ for any non-trivial $x$. Similarly for the
Levenshtein distance (edit distance) on $\free{2}$.

Our initial attempts to construct homogeneous length functions on
$\free{2}$ all failed. Of course, this failure is explained by our main
result. However, many of these methods apply under minor weakening of the
hypotheses, such as working with monoids rather than groups, or weakening
homogeneity. Results in these cases are discussed further in Section
\ref{remarks-sec}.

\subsection{Further motivations}

We next mention some motivations from functional analysis and
probability, or more precisely the study of Banach space embeddings.
If $G$ is an additive subgroup of a Banach space $\mathbb{B}$, then
clearly the norm on $\mathbb{B}$ restricts to a homogeneous length
function on $G$. In~\cite{CSC,GK} one can find several equivalent
conditions for a given length function on a given group to arise in this
way (studied in the broader context of additive mappings and separation
theorems in functional analysis); see also \cite[Theorem 2.10(II)]{Niem}
for an alternative proof. These conditions are summarized in \cite{KR2}.
For instance, given a group $G$ with a length function $\ell$, there
exists an isometric embedding from $G$ to a Banach space $\mathbb{B}$
with $\normsymb$ induced from the metric on $\mathbb{B}$, if and only if
$G$ is amenable and $\norm{x^2} = 2 \norm{x}$ for all $x$.

In view of such equivalences, it is natural to try and characterize the
groups possessing a homogeneous length function. This question is
answered by Corollary~\ref{abel}, which shows these are precisely the
abelian torsion-free groups.

Groups and semigroups with translation-invariant metrics also naturally
arise in probability theory, with the most important `normed' (i.e.,
homogeneous) examples being Banach spaces~\cite{LT}. Notice however that
in certain fundamental stochastic settings, formulating and proving
results does not require the full Banach space structure. In this vein, a
general variant of the Hoffmann-J{\o}rgensen inequality was shown in
\cite{KR1} in arbitrary metric semigroups -- including Banach spaces as
well as (non-abelian) compact Lie groups. Similarly in \cite{KR2}, the
authors transferred the (sharp) Khinchin--Kahane inequality from Banach
spaces to abelian groups $G$ equipped with a homogeneous length function.
To explore extensions of these results to the non-abelian setting (e.g.,
Lie groups with left-invariant metrics), we need to first understand if
such objects exist. As explained above, this question was not answered in
the literature; but it is now settled by our main result.

Finally, there may also be a relation to the Ribe program \cite{Naor},
which aims to reformulate aspects of Banach space theory in purely metric
terms. Indeed, from Corollary~\ref{abel} we see that a metric space $X$
is isometric to an additive subgroup of a Banach space if and only if
there is a group structure on $X$ which makes the metric left-invariant
and the length function $\norm{ x } \coloneqq d(1,x)$ homogeneous.

\section*{Acknowledgements}

This project is an online collaboration that originated from a blog post
at \url{https://terrytao.wordpress.com/2017/12/16}, following the model
of the ``Polymath'' projects \cite{gn}.  A full list of participants and
their grant acknowledgments may be found at
{\small
\url{http://michaelnielsen.org/polymath1/index.php?title=linear\_norm\_grant\_acknowledgments}}.
We also thank Michal Doucha for useful references and comments, in
particular in bringing the paper \cite{Niem} to our attention, and the
anonymous referee for helpful suggestions.

\section{Key proposition}

The key proposition used to prove Theorem~\ref{class} is the following
estimate, which can treat a somewhat more general class of functions than
homogeneous pseudo-length functions, in which the symmetry hypothesis is
dropped and one allows for an error in { the homogeneity
property}, which is now { also} only claimed for $n=2$.

\begin{proposition}\label{main}
Let $G = (G,\cdot)$ be a group, let { $c \in \R$}, and let
$\normsymb: G \to \R$ be a function obeying the following axioms:
\begin{itemize}
\item[(i)]  For any $x,y \in G$, one has
\begin{equation}\label{ng}
 \norm{xy} \leq \norm{x}+\norm{y}.
\end{equation}
\item[(ii)]  For any $x \in G$, one has
\begin{equation}\label{double}
 \norm{x^2} \geq 2\norm{x} - { c}.
\end{equation}
\end{itemize}
Then for any $x,y \in G$, one has
\begin{equation}\label{xyc}
 \norm{ [x,y] } \leq { 5 c},
\end{equation}
where the commutator $[x,y]$ was defined in \eqref{comdef}.
\end{proposition}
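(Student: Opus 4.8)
The plan is to exploit the two axioms through a sequence of clever substitutions of words in $x$ and $y$, always composing with powers and then letting $n \to \infty$ to kill additive constants. The basic engine is the observation that axiom (ii), iterated, gives good lower bounds for $\norm{w^{2^k}}$, and hence (by the triangle inequality applied in the reverse direction) forces $\norm{w^n}$ to be close to $n\norm{w}$ up to an $O(c)$ error, uniformly in $n$. Concretely, from $\norm{x^2} \ge 2\norm{x} - c$ one deduces by induction $\norm{x^{2^k}} \ge 2^k \norm{x} - (2^k-1)c$, and interpolating with the subadditivity bound $\norm{x^m} \le m\norm{x}$ one gets a two-sided estimate $|\norm{x^n} - n\norm{x}| \le C c$ for all $n$, with $C$ an absolute constant. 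This ``approximate homogeneity'' is the workhorse.

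Next I would look for an identity that expresses a high power of $[x,y]$ as a product of a bounded number of conjugates of powers of $x$ and $y$ — something in the spirit of the nilpotent-class-two identity $[x,y]^{n^2} = [x^n,y^n]$ mentioned in the introduction, but valid in an arbitrary group at the cost of a controlled number of extra factors. The key algebraic point is that $[x^n, y^n]$ can be rewritten as a telescoping product involving $n$ commutators of the form $[x, y]$ conjugated by various group elements; conjugation does not change $\normsymb$ only up to a triangle-inequality error, so one must instead arrange the word so that the conjugating elements themselves telescope. A cleaner route, and the one I would pursue, is to find a single word identity of the form $[x,y]^n = u_n v_n$ where $\norm{u_n} + \norm{v_n} \le A n\, \text{(linear-growth pieces)} + B$ but where the linear-growth pieces cancel upon applying approximate homogeneity — typically because $u_n$ and $v_n$ are, up to bounded-length corrections, of the form $p^n$ and $p^{-n}$ for a common $p$. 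Then $n\norm{[x,y]} = \norm{[x,y]^n} \le \norm{u_n} + \norm{v_n} = O(c)$, and dividing by $n$ and sending $n \to \infty$ gives $\norm{[x,y]} \le O(c)$; tracking the constants carefully yields the bound $5c$.

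The main obstacle is finding the right word identity and bookkeeping the constant down to exactly $5$. Getting \emph{some} bound $O(c)$ on $\norm{[x,y]}$ should be reasonably robust once the approximate-homogeneity lemma is in hand, but the sharp constant $5c$ presumably requires choosing the combinatorially optimal decomposition and being careful at each triangle-inequality step about whether one loses a full $c$ or only a fraction. I expect the proof to proceed by: (1) establishing the two-sided approximate homogeneity $|\norm{x^n} - n\norm{x}| \le Cc$; (2) using it to show $\norm{x^{-1}} \le \norm{x} + O(c)$ and hence that the lack of a symmetry axiom costs only $O(c)$; (3) writing down a word $w_n$ equal to $[x,y]^n$ (or to $[x,y]^{n}$ times a bounded correction) whose natural subadditive bound involves terms like $\norm{x^n}$, $\norm{x^{-n}}$, $\norm{y^n}$, $\norm{y^{-n}}$ that pair off; (4) applying steps (1)–(2) so the $n$-linear terms cancel, leaving $n\norm{[x,y]} \le 5cn + O(1)$; (5) dividing by $n$ and taking $n \to \infty$. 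The delicate part throughout is that conjugations must be organized into telescoping chains rather than bounded individually, since a single conjugation $\norm{gxg^{-1}} \le \norm{x} + 2\norm{g}$ is useless when $\norm{g}$ itself grows with $n$.
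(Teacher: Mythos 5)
Your overall instincts---iterate the axioms, pass to $n\to\infty$ to kill additive constants, and worry that conjugation losses grow with $n$---are sound, but two of your preparatory steps are false as stated, and the central cancellation mechanism you propose provably cannot exist. For step (1): iterating \eqref{double} gives only $\norm{x^{2^k}} \ge 2^k\norm{x} - (2^k-1)c$, an error \emph{linear} in the exponent, and no interpolation with $\norm{x^n}\le n\norm{x}$ improves this to an absolute $O(c)$. Indeed the constant function $\normsymb\equiv c$ on $\Z$ satisfies both axioms, yet $n\norm{x}-\norm{x^n}=(n-1)c$ is unbounded; the only usable consequence of \eqref{double} is the normalized form $\norm{x}\le \norm{x^n}/n+c$. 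Step (2) also fails: the hypotheses genuinely do not control $\norm{x^{-1}}$ by $\norm{x}+O(c)$ (on $\Z$, the function $\norm{n}=\max(n,0)+2\max(-n,0)$ satisfies \eqref{ng} and satisfies \eqref{double} with $c=0$, yet $\norm{-1}=2\norm{1}$).

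More fundamentally, the identity hoped for in steps (3)--(4)---$[x,y]^n=u_nv_n$ with the linear-in-$n$ contributions cancelling exactly, so that $\norm{[x,y]^n}=O(c)$ uniformly in $n$---cannot exist. Brooks-type quasi-morphisms on $\free{2}$ yield functions satisfying (i) and (ii) with a fixed $c$ for which $\norm{[a,b]^n}$ grows linearly in $n$; relatedly, the stable commutator length of $[a,b]$ in $\free{2}$ is positive, so $[x,y]^n$ cannot be assembled from a bounded number of cheap pieces, and there is in any case no mechanism in the axioms forcing $\norm{p^n}+\norm{p^{-n}}$ to be small. The actual proof settles for \emph{square-root} cancellation rather than exact cancellation. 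One first proves $\norm{yxy^{-1}}\le\norm{x}+c$---not by telescoping, but by conjugating $x^n$ instead of $x$ and dividing by $n$, which is the cure for the ``useless'' $+2\norm{y}$ loss you flag---then a splitting lemma, and then the recursive inequality $f(m,k)\le \tfrac12 f(m-1,k)+\tfrac12 f(m+1,k-1)+2c$ for $f(m,k):=\norm{x^m[x,y]^k}$. Iterating this $2n$ times from $f(0,n)$ makes the exponent of $x$ perform a mean-zero random walk, contributing only $O(\sqrt{n})$, while the deterministic drift in $k$ absorbs all $n$ commutators at total cost $4cn$; dividing by $n$ and adding one more $c$ from the rearranged form of \eqref{double} gives $5c$. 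Without this averaging over exponentially many words (or an equivalent device), the linear terms do not cancel and the argument cannot close.
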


Notably, we neither assume symmetry $\norm{ x^{-1} } = \norm{ x }$, not
even up to a constant, nor $\norm{ e } = 0$ { (although $0
\leq \norm{ e } \leq c$ follows from the axioms)}; we also allow
$\normsymb$ to take on negative values. The reader may however wish to
restrict attention to { homogeneous} length functions, and
set { $c=0$ and $\normsymb \geq 0$} for a first reading of
the arguments below.  The { factor of $5$ is} probably not
optimal here, but the crucial feature of the bound \eqref{xyc} for our
main application is that the right-hand side vanishes when {
$c=0$} (the right-hand side is also independent of $x$ and $y$, which we
use in other applications).

{ We define a \emph{semi-length function} to be a function
$\normsymb: G \to \R$ such that for all $x, y\in G$, $\norm{xy} \leq
\norm{x} + \norm{y}$, i.e.  $\normsymb$ satisfies ~\eqref{ng}. Every
pseudo-length function is a semi-length function. A semi-length function
that satisfies \eqref{double} for some $c\in\R$ is called
\emph{quasi-homogeneous}.

\begin{remark}\label{quasilength}
Suppose $\normsymb: G \to \R$ and there is a constant $k$ such that
$\norm{xy} \leq \norm{x} + \norm{y} + k$ for all $x, y\in G$. Then the
function $\normsymb'(x) := \norm{x} + k$ is a semi-length {
function}. Further,
$\normsymb'$ satisfies~\eqref{double} with $c$ replaced by $c' := k+c$,
{ whenever $\normsymb$ satisfies~\eqref{double} on the nose.}
Thus Proposition~\ref{main} continues to hold if \eqref{ng} is replaced
by the condition $\norm{xy} \leq \norm{x} + \norm{y} + k$ for all $x,
y\in G$, with the bound in the conclusion \eqref{xyc} becoming $5c + 4k$.
\end{remark}
}

We now turn to the proof. For the remainder of this section, let $G$,
{ $c$}, and $\normsymb$ satisfy the hypotheses of the
proposition.  Our task is to establish the bound \eqref{xyc}.  We shall
now use \eqref{ng} and \eqref{double} repeatedly to establish a number of
further inequalities relating the { semi-}lengths $\norm{x}$ of various
elements $x$ of $G$, culminating in \eqref{xyc}.
Many of our inequalities will involve terms that depend on an auxiliary
parameter $n$, but we will be able to eliminate several of them by the
device of passing to the limit $n \to \infty$.  It is because of this
device that we are able to obtain a bound \eqref{xyc} whose right-hand
side is completely uniform in $x$ and $y$.

From \eqref{ng} and induction we have the upper homogeneity
bound
\begin{equation}\label{ng-iter}
 \norm{x^n} \leq { n \norm{x}}
\end{equation}
for any natural number $n \geq 1$.  Similarly, from \eqref{double} and
induction one has the lower homogeneity bound
\[
\norm{x^n} \geq { n \norm{x} - \log_2(n)\, c \geq n \norm{x} - n c}
\]
whenever $n$ is a power of two.  It is convenient to rearrange this
latter inequality as
\begin{equation}\label{rearrange}
\norm{x} \leq \frac{\norm{x^n}}{n}  + { c}.
\end{equation}
This inequality, particularly in the asymptotic limit $n \to \infty$,
will be the principal means by which the hypothesis \eqref{double} is
employed.

We remark that by further use of \eqref{ng-iter} one can also obtain a
similar estimate to \eqref{rearrange} for natural numbers $n$ that are
not powers of two, but the powers of two will suffice for the
arguments that follow.

\begin{lemma}[Approximate conjugation invariance]\label{aci}
For any $x,y \in G$, one has
\[
\norm{yxy^{-1} } \leq { \norm{ x } + c}.
\]
\end{lemma}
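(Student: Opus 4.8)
The goal is to bound $\norm{yxy^{-1}}$ by $\norm{x}+c$. The natural device, given the emphasis in the preceding text on passing to the limit $n\to\infty$, is to exploit the algebraic identity $(yxy^{-1})^n = y x^n y^{-1}$, together with the lower-homogeneity bound \eqref{rearrange} and the upper-homogeneity bound \eqref{ng-iter}. First I would write, for $n$ a power of two,
\[
\norm{yxy^{-1}} \;\leq\; \frac{\norm{(yxy^{-1})^n}}{n} + c \;=\; \frac{\norm{y x^n y^{-1}}}{n} + c,
\]
using \eqref{rearrange} in the first step and the identity $(yxy^{-1})^n = yx^ny^{-1}$ in the second.

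Next I would apply the triangle inequality \eqref{ng} twice to split $\norm{yx^ny^{-1}} \leq \norm{y} + \norm{x^n} + \norm{y^{-1}}$, and then apply the upper bound \eqref{ng-iter} to get $\norm{x^n} \leq n\norm{x}$. This yields
\[
\norm{yxy^{-1}} \;\leq\; \frac{\norm{y} + n\norm{x} + \norm{y^{-1}}}{n} + c \;=\; \norm{x} + \frac{\norm{y}+\norm{y^{-1}}}{n} + c.
\]
Now letting $n\to\infty$ through powers of two, the middle term vanishes (here it is crucial that $\norm{y}$ and $\norm{y^{-1}}$ are fixed finite real numbers not depending on $n$), and we are left with $\norm{yxy^{-1}} \leq \norm{x} + c$, as desired.

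**Main obstacle.** There is essentially no serious obstacle here; the only thing to be careful about is that the lower-homogeneity bound \eqref{rearrange} was only established for $n$ a power of two, so the limit must be taken along that subsequence — which is harmless since $\frac{\norm{y}+\norm{y^{-1}}}{n}\to 0$ along any sequence $n\to\infty$. One should also note that symmetry of $\normsymb$ is not used: the terms $\norm{y}$ and $\norm{y^{-1}}$ are simply carried along separately and both disappear in the limit. This lemma is the prototype for the more elaborate limiting arguments to follow in the proof of Proposition~\ref{main}.
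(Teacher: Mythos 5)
Your proof is correct and is essentially identical to the paper's: both apply \eqref{rearrange} to $yxy^{-1}$, use the identity $(yxy^{-1})^n = yx^ny^{-1}$ together with \eqref{ng} and \eqref{ng-iter} to bound $\norm{yx^ny^{-1}}$ by $\norm{y}+n\norm{x}+\norm{y^{-1}}$, and send $n\to\infty$ through powers of two. The cosmetic difference (the paper carries a $-c/n$ term from a slightly different rearrangement) is immaterial.
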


\begin{remark}\label{Rinv}
Setting { $c=0$}, we conclude that any homogeneous
pseudo-length function is conjugation invariant, and thus determines a
bi-invariant metric on $G$. It should not be surprising that this
observation is used in the proof of Theorem~\ref{class}, since it is a
simple consequence of that theorem.
\end{remark}

\begin{proof}[Proof of Lemma~\ref{aci}]
From \eqref{rearrange} with $x$ replaced by $yxy^{-1}$, one has
\[
\norm{yxy^{-1}} \leq \frac{\norm{y x^n y^{-1}} }{n} + { c}
\]
whenever $n$ is a power of two.  On the other hand, from \eqref{ng-iter}
and \eqref{ng} one has
\[
\norm{ y x^n y^{-1} } \leq \norm{y} + n \norm{x} + \norm{y^{-1}}
\]
and thus
\[
\norm{yxy^{-1}} \leq \norm{x} + { c} + \frac{\norm{y} +
\norm{y^{-1}} { - c}}{n}.
\]
Sending $n \to \infty$, we obtain the claim.
\end{proof}

\begin{lemma}[Splitting lemma]\label{split}
Let $x,y,z,w \in G$ be such that $x$ is conjugate to both $wy$ and
$zw^{-1}$.  Then one has
\begin{equation}\label{ineq}
\norm{x} \leq \frac{\norm{y} + \norm{ z } }{2}  + \frac{3}{2}
{ c}.
\end{equation}
\end{lemma}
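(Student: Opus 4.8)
The plan is to reduce the statement to an estimate on a large power $x^{2n}$ of $x$ via inequality~\eqref{rearrange}, and to obtain that estimate by writing $x^{2n}$ as a word built from the \emph{two} conjugacy presentations of $x$ in such a way that the $w$'s telescope out. Concretely, applying \eqref{rearrange} to $x$ with $N = 2n$ a power of two gives $\norm{x} \le \frac{\norm{x^{2n}}}{2n} + c$, so it suffices to prove a bound of the shape
\[
\norm{x^{2n}} \le n\bigl(\norm{y} + \norm{z} + c\bigr) + O(1),
\]
with the implied constant independent of $n$; letting $n \to \infty$ through powers of two then yields $\norm{x} \le \tfrac12(\norm{y}+\norm{z}) + \tfrac32 c$.

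For the set-up I fix $g,h \in G$ with $x = g(wy)g^{-1}$ and $x = h(zw^{-1})h^{-1}$, and put $k := g^{-1}h$. Comparing the two presentations of $x$ gives the single relation $k^{-1}(wy)k = zw^{-1}$, i.e.\ $(wy)\,k = k\,(zw^{-1})$, which is what I will feed in repeatedly. I also record the elementary identities $yw = w^{-1}(wy)w$ and $w^{-1}z = z^{-1}(zw^{-1})z$, and — crucially — the algebraic cancellation $(yw)(w^{-1}z) = yz$, which is the only product of conjugates of $x$ in sight that is visibly free of $w$.

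Finally I expand $x^{2n}$ as the alternating product $[g(wy)g^{-1}]\,[h(zw^{-1})h^{-1}]\,[g(wy)g^{-1}]\cdots$ of length $2n$; collapsing the internal conjugators using $g^{-1}h = k$ and $h^{-1}g = k^{-1}$ exhibits $x^{2n}$ as a conjugate of $\bigl[(wy)\,k\,(zw^{-1})\,k^{-1}\bigr]^{n}$. Invoking $(wy)k = k(zw^{-1})$ together with the cyclic identities above, I would reorganise this word so that, inside each unit of the bulk, a $w$ coming from a $wy$-block is brought next to a $w^{-1}$ coming from the neighbouring $zw^{-1}$-block and cancelled, leaving that unit equal to a conjugate of $yz$. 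This presents $x^{2n}$, up to one outer conjugation, as a product of $n$ conjugates of $yz$, and bounding each conjugate by Lemma~\ref{aci} gives $\norm{x^{2n}} \le n(\norm{yz} + c) + O(1) \le n(\norm{y}+\norm{z}+c) + O(1)$, which is the bound needed above. The main obstacle is exactly this last reorganisation: carried out carelessly, the ``bulk unit'' simplifies — by the very same relation — back to a conjugate of $x$ and still carries a stray $w$, so one must interleave the two presentations and apply $(wy)k = k(zw^{-1})$ at precisely the right places (equivalently, work throughout with the rotations $yw$ and $w^{-1}z$ rather than $wy$ and $zw^{-1}$) so that every $w^{\pm1}$ meets its partner and only the $n$ conjugating elements — each costing a single $c$ via Lemma~\ref{aci} — survive. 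Everything else, namely the reduction in the first paragraph and the final division by $2n$ and passage to the limit, is routine.
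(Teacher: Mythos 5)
Your outer scaffolding is exactly right --- reducing via \eqref{rearrange} to a bound of the shape $\norm{x^{2n}} \leq n(\norm{y}+\norm{z}+c)+O(1)$, dividing by $2n$, and letting $n\to\infty$ through powers of two is precisely how the lemma is proved, and the constant $\tfrac32 c$ comes out correctly from that accounting. But the step you yourself flag as ``the main obstacle'' is the entire content of the lemma, and the arrangement you have committed to cannot deliver it. In the strictly alternating product $x^{2n}=g\bigl[(wy)\,k\,(zw^{-1})\,k^{-1}\bigr]^n g^{-1}$, every $wy$-block is separated from every $zw^{-1}$-block by a copy of $k^{\pm1}$, and the only available relation $(wy)k=k(zw^{-1})$ never brings a $w$ next to a $w^{-1}$: sliding a block past $k$ converts it into a block of the other type, so the bulk unit collapses to $k(zw^{-1})^2k^{-1}$ and the whole word to $k(zw^{-1})^{2n}k^{-1}$, i.e.\ back to the tautology $x^{2n}=h(zw^{-1})^{2n}h^{-1}$. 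Passing to the rotations $yw$ and $w^{-1}z$ does not change this: the two block types remain separated by a fixed conjugator occurring $2n$ times, and you have not exhibited (nor does there appear to be) an identity presenting $x^{2n}$, up to a bounded outer conjugation, as a product of $n$ conjugates of $yz$. Already for $n=1$ one has $x^2=s(wy)s^{-1}t(zw^{-1})t^{-1}=sw\cdot y\cdot(w^{-1}s^{-1}t)\cdot z\cdot w^{-1}t^{-1}$, with the conjugator wedged between $y$ and $z$.

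The fix is to \emph{not} alternate. Write $x^{2n}=x^n\cdot x^n = s(wy)^n\, s^{-1}t\, (zw^{-1})^n t^{-1}$, using the first presentation for the whole first half and the second for the whole second half. Now the conjugator $s^{-1}t$ occurs exactly once, in the middle, contributing $O(1)$, and the $w$'s do meet --- but from the outside in rather than internally: the leftmost $w$ of $(wy)^n$ and the rightmost $w^{-1}$ of $(zw^{-1})^n$ are removed together by one application of Lemma~\ref{aci} to the \emph{entire} word (the only way that lemma can be used; it does not apply to an internal subword in place), after which the exposed $y$ on the left and $z$ on the right are peeled off by \eqref{ng}. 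Each round costs $\norm{y}+\norm{z}+c$, and induction over $n$ rounds gives $\norm{(wy)^n s^{-1}t (zw^{-1})^n}\leq \norm{s^{-1}t}+n(\norm{y}+\norm{z}+c)$, which is exactly the estimate your first paragraph requires. So the reduction and the cost accounting in your proposal are sound, but the combinatorial heart of the argument is missing, and the alternating decomposition you chose is a dead end.
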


\begin{proof}
If we write $x = swys^{-1} = t zw^{-1} t^{-1}$ for some $s,t \in G$, then
from \eqref{rearrange} we have
\begin{align*}
 \norm{x} &\leq \frac{\norm{ x^n x^n } }{2n} + { c} \\
&= \frac{\norm{ s(wy)^n s^{-1}t (zw^{-1})^n t^{-1} } }{2n} +
{ c}
\end{align*}
whenever $n$ is a power of two.  From Lemma~\ref{aci} and \eqref{ng} one
has
\begin{align*}
\norm{(wy)^{k+1} s^{-1}t (zw^{-1})^{k+1} } &= \norm{ w y (wy)^k s^{-1} t
(zw^{-1})^k z w^{-1} } \\
&\leq \norm{ y (wy)^k s^{-1} t (zw^{-1})^k z } + { c} \\
&\leq \norm{ (wy)^k s^{-1} t (zw^{-1})^k } + \norm{y} + \norm{z} +
{ c}
\end{align*}
for any $k \geq 0$, and hence by induction
\[
\norm{(wy)^n s^{-1}t (zw^{-1})^n } \leq \norm{s^{-1} t} + n ( \norm{y} +
\norm{z} + { c} ).
\]
Inserting this into the previous bound for $\norm{x}$ via two
applications of \eqref{ng}, we conclude that
\[
\norm{x} \leq \frac{\norm{y} + \norm{z}+ { c} }{2} +
\frac{\norm{s} + \norm{s^{-1}t} + \norm{t^{-1}} }{2n} + { c};
\]
sending $n \to \infty$, we obtain the claim.
\end{proof}

\begin{corollary}
If $x,y \in G$, let $f = f_{x,y}: \Z^2 \to \R$ denote the function
\[
f(m,k) \coloneqq \norm{ x^m [x,y]^k }.
\]
Then for any $m,k \in \Z$, we have
\begin{equation}\label{fmk}
 f(m,k) \leq \frac{f(m-1,k) + f(m+1,k-1)}{2} + { 2c}.
\end{equation}
\end{corollary}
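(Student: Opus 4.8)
The plan is to deduce \eqref{fmk} from the Splitting Lemma (Lemma~\ref{split}); the only real work is to choose the four elements fed into that lemma so that the three lengths it outputs are precisely $f(m,k)$, $f(m-1,k)$, and (a conjugate of) $x^{m+1}[x,y]^{k-1}$, with exactly the right amount of slack to produce the error term $2c$.

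Abbreviate $g \coloneqq [x,y]$, so that $f(m,k)=\norm{x^m g^k}$ and the assertion reads $f(m,k)\le \tfrac12\bigl(f(m-1,k)+f(m+1,k-1)\bigr)+2c$. I would apply Lemma~\ref{split} with its elements $x,y,z,w$ taken to be, respectively,
\[
x^m g^k,\qquad x^{m-1}g^k,\qquad (xy)^{-1}x^{m+1}g^{k-1}(xy),\qquad x .
\]
Its two hypotheses then ask that $x^m g^k$ be conjugate to $x\cdot x^{m-1}g^k$ and to $(xy)^{-1}x^{m+1}g^{k-1}(xy)\cdot x^{-1}$. The first is immediate because $x\cdot x^{m-1}g^k$ is literally $x^m g^k$. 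For the second I would use the rearrangement $xyx^{-1}=gy$ of $g=xyx^{-1}y^{-1}$ to compute
\[
(xy)^{-1}x^{m+1}g^{k-1}(xy)\cdot x^{-1}=y^{-1}x^m g^{k-1}(xyx^{-1})=y^{-1}x^m g^{k-1}g\,y=y^{-1}\bigl(x^m g^k\bigr)y ,
\]
which is a conjugate of $x^m g^k$. Lemma~\ref{split} then delivers
\[
f(m,k)=\norm{x^m g^k}\le \tfrac12\Bigl(\norm{x^{m-1}g^k}+\norm{(xy)^{-1}x^{m+1}g^{k-1}(xy)}\Bigr)+\tfrac32 c .
\]
Here $\norm{x^{m-1}g^k}=f(m-1,k)$ exactly, while $(xy)^{-1}x^{m+1}g^{k-1}(xy)$ is a conjugate of $x^{m+1}g^{k-1}$, so Lemma~\ref{aci} bounds its length by $f(m+1,k-1)+c$. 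Substituting gives $f(m,k)\le\tfrac12\bigl(f(m-1,k)+f(m+1,k-1)+c\bigr)+\tfrac32 c=\tfrac12\bigl(f(m-1,k)+f(m+1,k-1)\bigr)+2c$. All the manipulations above are formal identities in the group word algebra, so the argument is valid for every $m,k\in\Z$.

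The step I expect to be the real obstacle is the choice of the third element $z$. The tempting choice $z=x^{m+1}g^{k-1}$, which would make its length exactly $f(m+1,k-1)$, fails: then $z\cdot x^{-1}=x^{m+1}g^{k-1}x^{-1}$ is conjugate to $x^m g^{k-1}$, not to $x^m g^k$, and these two need not be conjugate (already at $m=0$, distinct powers of the non-trivial element $g$ are never conjugate in a free group). Conjugating $x^{m+1}g^{k-1}$ by $xy$ is exactly the device that uses the relation $g=xyx^{-1}\cdot y^{-1}$ to reinstate the missing factor of $g$; this costs one application of Lemma~\ref{aci}, i.e.\ one extra $c$ entering with coefficient $\tfrac12$, and that is precisely what the intrinsic $\tfrac32 c$ of Lemma~\ref{split} needs in order to reach $2c$. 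It is equally important that $w$ be kept equal to $x$, so that $wy=x^m g^k$ holds on the nose and $\norm{x^{m-1}g^k}$ can be identified with $f(m-1,k)$ for free; settling for a mere conjugacy there would spend a further $\tfrac12 c$ and overshoot the stated bound. Everything else is routine bookkeeping.
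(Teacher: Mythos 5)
Your proof is correct and is essentially the paper's own argument: both apply Lemma~\ref{split} with $w=x$ and the same third element (your $(xy)^{-1}x^{m+1}[x,y]^{k-1}(xy)$ is literally the paper's $y^{-1}x^{m}[x,y]^{k-1}xy$), then use Lemma~\ref{aci} once to replace its length by $f(m+1,k-1)+c$, giving $\tfrac{3}{2}c+\tfrac{1}{2}c=2c$. Nothing further to add.
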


\begin{proof}
Observe that $x^m [x,y]^k$ is conjugate to both $x (x^{m-1} [x,y]^k)$ and
to $(y^{-1} x^m [x,y]^{k-1} xy) x^{-1}$, hence by \eqref{ineq} one has
\[
\norm{ x^m [x,y]^k } \leq \frac{\norm{ x^{m-1} [x,y]^k } + \norm{ y^{-1}
x^{m} [x,y]^{k-1} xy }}{2} + { \frac{3}{2} c}.
\]
Since $y^{-1} x^{m} [x,y]^{k-1} xy $ is conjugate to $x^{m+1}
[x,y]^{k-1}$, the claim now follows from Lemma~\ref{aci}.
\end{proof}

We now prove Proposition~\ref{main}. Let $x,y \in G$.  We can
write the inequality \eqref{fmk} in probabilistic form as
\[
f(m,k) \leq {\mathbf E} f\left( \left(m,k-\frac{1}{2}\right) + Y
\left(1,-\frac{1}{2}\right) \right) + { 2 c}
\]
where $Y = \pm 1$ is a Bernoulli random variable that equals $1$ or $-1$
with equal probability.  The key point here is the drift of $\left(0,
-\frac{1}{2} \right)$ in the right-hand side.
Iterating this inequality, we see that
\[
f( 0, n) \leq {\mathbf E} f\left( (Y_1 + \dots + Y_{2n}) \left(1,
-\frac{1}{2}\right) \right) + { 4 c n},
\]
where $n \geq 0$ and $Y_1,\dots,Y_{2n}$ are independent copies of $Y$ (so
in particular $Y_1+\dots+Y_{2n}$ is an even integer).

From \eqref{ng} and \eqref{ng-iter} one has the inequality
\begin{eqnarray*}
f(m,k) & \leq &
|m| \left(\max( \norm{x}, \norm{x^{-1}}) \right)\\
& & + |k| \left(\max(
\norm{[x,y]}, \norm{[x,y]^{-1}}) \right) + \norm{e}
\end{eqnarray*}
for all integers $m,k$, where the final term $\norm{e}$ is used when
$m=k=0$, { but can also be added in the remaining cases since
it is non-negative}.  We conclude that
\[
f\left( (Y_1 + \dots + Y_{2n}) \left(1, -\frac{1}{2}\right) \right) \leq
A |Y_1 + \dots + Y_{2n}| + \norm{e}
\]
where $A$ is a quantity independent of $n$; more explicitly, one can take
\[
A \coloneqq \max\left( \norm{x}, \norm{x^{-1}}\right) + \frac{1}{2}
\max\left(\norm{[x,y]}, \norm{[x,y]^{-1}}\right).
\]
Taking expectations, since the random variable $Y_1+\dots+Y_{2n}$ has
mean zero and variance $2n$, we see from the Cauchy--Schwarz inequality
or Jensen's inequality that
\[
{\mathbf E}|Y_1 + \dots + Y_{2n}|  \leq \left( {\mathbf E}|Y_1 + \dots +
Y_{2n}|^2 \right)^{1/2} = \sqrt{2n}
\]
and hence
\[
f(0,n) \leq A \sqrt{2n} + \norm{e} + { 4 c n}.
\]
But from \eqref{rearrange}, if $n$ is a power of $2$ then we have
\[
\norm{[x,y]} \leq \frac{f(0,n)}{n}  + { c}.
\]
Combining these two bounds and sending $n \to \infty$, we obtain
Proposition~\ref{main}.



\begin{remark}
{
One can deduce a `local' version of Proposition \ref{main} as follows:
notice that the constant $c$ can be described in terms of $\normsymb$
from \eqref{double}, to yield
\begin{equation}\label{9bound}
\norm{ [x,y] } \leq 5 \sup_{z \in G} \left( 2 \norm{z} - \norm{z^2} \right)
\end{equation}
for any group $G$ and function $\normsymb : G \to \R$ for which this supremum exists, and any $x,y \in G$.
(Both sides are zero when $G$ is a Banach space and $\normsymb$ is the
norm, so equality is obtained in that case.)
It is also enough to consider the supremum over the subgroup of $G$
generated by $x$ and $y$ without loss of generality, which may lead to a
better bound on $\norm{ [x,y] }$ than taking the supremum over all of $G$.
Notice also that the constant $c$ must be non-negative, from \eqref{double} and \eqref{ng} with $x=y=e$:
\begin{equation}\label{Enonneg2}
c \geq 2 \norm{e} - \norm{e^2} = \norm{e} \geq \norm{e^2} - \norm{e} = 0.
\end{equation}
In fact, this reasoning and our results imply that the only way to get
$c=0$ on the right-hand side of \eqref{xyc} is when $\normsymb$
arises from pulling back the norm of a Banach space $\mathbb{B}$ along a
group homomorphism $G\to\mathbb{B}$, or equivalently along a group
homomorphism from the torsion-free abelianization of $G$ to $\mathbb{B}$.
}
\end{remark}

\section{Completing the proof of Theorem~\ref{class}}

With Proposition~\ref{main} in hand, it is not difficult to conclude the
proof of Theorem~\ref{class}.  Suppose that $G$ is a group with a
homogeneous { semi-}length function $\normsymb: G \to [0,+\infty)$.
Applying Proposition~\ref{main} with { $c=0$}, we conclude that
$\norm{[x,y]}=0$ for all $x,y \in G$, thus by the triangle inequality
$\normsymb$ vanishes on the commutator subgroup $[G,G]$, and therefore
factors through the abelianization $G_{\mathrm{ab}} \coloneqq G/[G,G]$ of
$G$.  Observe that this already establishes part of one implication of
Corollary~\ref{abel}.  Factoring out by $[G,G]$ like this, we may now
assume without loss of generality that $G$ is abelian.  To
reflect this, we now use additive notation for $G$, thus for instance
$\norm{nx} = |n| \norm{x}$ for each $x \in G$ and $n \in \Z$, and one can
also view $G$ as a module over the integers $\Z$.

At this point we repeat the arguments in \cite[Theorem B]{KR2}, which
treated the case when $G$ was separable, though it turns out that this
separability hypothesis is unnecessary.

If $x$ is a torsion element of $G$, i.e.~$nx=0$ for some $n$, then the
homogeneity condition forces $\norm{x}=0$.  Thus $\normsymb$ vanishes on
the torsion subgroup of $G$; factoring out by this subgroup, we may thus
assume without loss of generality that $G$ is not only abelian, but is
also torsion-free.

We can view $G$ as a subgroup of the $\Q$-vector space $G \otimes_\Z \Q$,
the elements of which can be formally expressed as $\frac{1}{n} x$ for
natural numbers $n$ and elements $x \in G$ (with two such expressions
$\frac{1}{n} x, \frac{1}{m} y$ identified if and only if $mx = ny$, and
the $\Q$-vector space operations defined in the obvious fashion); the
fact that this is well defined as a $\Q$-vector space follows from the
hypotheses that $G$ is abelian and torsion-free.  We can then define
the map $\| \, \|_\Q: G \otimes_\Z \Q \to [0,+\infty)$ by setting
\[
\left\|\frac{1}{n} x\right\|_\Q \coloneqq \frac{1}{n} \norm{x}
\]
for any $x \in G$ and natural number $n$; the linear growth condition
ensures that $\|\,\|_\Q$ is well-defined.  It is not difficult to verify
that $\| \, \|_\Q$ is indeed a seminorm over the $\Q$-vector space $G
\otimes_\Z \Q$.

The norm $\| \, \|_\Q$ on $G \otimes_\Z \Q$ gives a metric $d(x, y) = \|
x-y \|_\Q$. Consider the metric completion $\mathbb{B}$ of $G \otimes_\Z
\Q$ with this metric. It is easy to see that the $\Q$-vector space
structure on $G \otimes_\Z \Q$ extends to an $\R$-vector space structure
on $\mathbb{B}$, and the norm $\|\,\|_\Q$  on $G \otimes_\Z \Q$ extends
to a norm $\|\,\|_\R$  on $\mathbb{B}$. As $\mathbb{B}$ is complete by
construction, it is a Banach space. The inclusion of $G$ in $G \otimes_\Z
\Q$ gives a homomorphism $\phi: G \to \mathbb{B}$ as required.

This concludes the proof of Theorem~\ref{class}.  Since the homomorphism
$\phi: G \to \mathbb{B}$ can only be injective for abelian torsion-free
$G$, we obtain the ``only if'' portion of Corollary~\ref{abel}.
Conversely, if a group $G$ is abelian and torsion-free, by the above
constructions it embeds into a real vector space $\mathbb{B} \coloneqq G
\otimes_\Z \R$; now by Zorn's lemma $\mathbb{B}$ has a norm (e.g.,
consider the $\ell^1$ norm with respect to a Hamel basis of
$\mathbb{B}$), which restricts to the desired homogeneous length function
on $G$.
We remark that $G \otimes_\Z \R$ is the construction of the smallest,
`enveloping' vector space containing a copy of the abelian, torsion-free
group $G$.

\begin{remark}
The above arguments also show that homogeneous pseudo-length functions on
$G$ are in bijection with seminorms on the real vector space
$G_{\mathrm{ab},0} \otimes_\Z \R$, where $G_{\mathrm{ab},0}$ denotes the
torsion-free abelianization of $G$.
\end{remark}

\section{Further remarks and results}\label{remarks-sec}

If we weaken any of several conditions in Corollary \ref{abel}, then
examples of non-abelian structures with generalized length functions do,
in fact, often exist.
However, the generality of Proposition 2.1 allows us to obtain
non-trivial information in some of these cases. Here we mention several
such cases and discuss other related problems.

\subsection{Monoids and embeddings}

Our first weakening is to replace `groups' by the more primitive
structures `monoids' or `semigroups'. In this case, Robert Young (private
communication) described to us non-abelian monoids with homogeneous,
bi-invariant length functions:
consider the free monoid $\mathrm{FMon}(X)$ on any alphabet $X$ of size
at least $2$, with the edit distance $d(v,w)$ between strings $v,w \in
\mathrm{FMon}(X)$ being the least number of single generator insertions
and deletions to get from $v$ to $w$. The triangle inequality and
positivity are easily verified, while homogeneity of the corresponding
length function $\norm{x} \coloneqq d(e,x)$ is trivial. Moreover, the
metric $d(\cdot,\cdot)$ turns out to be bi-invariant:
\[
d(gxh,gyh) = d(x,y) \ \text{ for all }\ g,h,x,y \in \mathrm{FMon}(X).
\]
This specializes to left- and right-invariance upon taking $g \in X$ and
$h = e$, or  $h \in X$ and $g = e$, respectively.

Note moreover that $\mathrm{FMon}(X)$ embeds into the free group
$\mathrm{FGp}(X)$ generated by $X$ and $X^{-1}$, where $X^{-1}$ is the
collection of symbols defined to be inverses of elements of $X$. In
particular, $\mathrm{FMon}(X)$ is cancellative. While this trivially
addresses the embeddability issue, notice that a more refined version of
embeddability fails. Namely, by our main theorem, $\mathrm{FMon}(X)$ does
not embed into any group in the category $\mathcal{C}_{\rm{bi-inv,\
hom}}$ with cancellative semigroups with homogeneous bi-invariant metrics
as objects and isometric semigroup maps as morphisms. Thus, one may
reasonably ask what is a sufficiently small category in which the
embeddability works. The following proposition shows that we just need to
drop homogeneity.

\begin{proposition}\label{Pisometry}
Let $\mathcal{C}_{\rm{bi-inv}}$ denote the category whose objects are
cancellative semigroups with bi-invariant metrics, and morphisms are
isometric semigroup maps. Then $\mathrm{FMon}(X)$ embeds isometrically
into $\mathrm{FGp}(X)$ in $\mathcal{C}_{\rm{bi-inv}}$.
\end{proposition}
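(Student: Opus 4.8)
The plan is to exhibit an explicit bi-invariant metric on the free group $\mathrm{FGp}(X)$ whose restriction to $\mathrm{FMon}(X) \subset \mathrm{FGp}(X)$ recovers the edit distance described above, so that the inclusion $\mathrm{FMon}(X) \hookrightarrow \mathrm{FGp}(X)$ is an isometric semigroup (in fact group) embedding. The natural candidate is the edit distance on $\mathrm{FGp}(X)$ itself: for reduced words $v,w$ over the alphabet $X \cup X^{-1}$, let $d(v,w)$ be the least number of single-letter insertions and deletions (of letters from $X \cup X^{-1}$) needed to transform $v$ into $w$, where after each operation we re-reduce the word. Equivalently, $d(v,w) = \norm{v^{-1}w}$ where $\norm{u}$ is the minimal number of generators-and-inverses insertions/deletions needed to trivialize $u$; one checks this is the same as the syllable-free ``algebraic'' length, namely the word length of $u$ in $\mathrm{FGp}(X)$ (since the shortest way to delete a reduced word is to cancel it letter by letter, and insertions never help reduce length). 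So $d$ is just the usual word metric on the free group, which is manifestly left-invariant.

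First I would record that this word metric on $\mathrm{FGp}(X)$ is bi-invariant: the free group is a bi-orderable... no — rather, one uses that the word-length length function $\norm{\cdot}$ is conjugation-invariant? That is false on the free group (e.g. $\norm{bab^{-1}} = 3 \neq 1 = \norm{a}$), so the plain word metric is \emph{not} bi-invariant, and this is the main obstacle: I must choose the right metric on $\mathrm{FGp}(X)$. The correct move is to use the edit distance on $\mathrm{FGp}(X)$ in the sense of cyclic words, or more precisely: for $g \in \mathrm{FGp}(X)$ set $\norm{g}$ to be the cyclically reduced word length of $g$, i.e. the length of the shortest word representing any conjugate of $g$. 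This is conjugation-invariant by construction, hence the induced metric $d(g,h) = \norm{g^{-1}h}$ is left-invariant and conjugation-invariant, so bi-invariant. One must then verify the triangle inequality for this $\norm{\cdot}$, which is the standard fact that cyclic word length is subadditive on free groups; this can be proved by a direct cancellation argument on reduced words, or cited. I would then check that $d$ restricted to $\mathrm{FMon}(X)$ agrees with Robert Young's edit distance: for $v, w \in \mathrm{FMon}(X)$ (positive words), the element $v^{-1}w$ is already cyclically reduced after cancelling the common prefix of $v$ and $w$, so $\norm{v^{-1}w}$ equals $(\lvert v\rvert - p) + (\lvert w \rvert - p)$ where $p$ is the length of the longest common prefix — and a short lemma shows this is exactly the insertion/deletion edit distance of the positive words $v$ and $w$. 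Finally, cancellativity of $\mathrm{FGp}(X)$ is automatic since it is a group, and injectivity of the inclusion is clear.

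The step I expect to be the main obstacle is getting the metric on $\mathrm{FGp}(X)$ right so that it is simultaneously (a) bi-invariant and (b) restricts correctly to $\mathrm{FMon}(X)$; the naive word metric fails (a), so the cyclic-length fix is essential, and then one owes a careful check of (b) — in particular that passing to cyclically reduced form does not shorten $v^{-1}w$ below the monoid edit distance. A secondary point requiring care is the subadditivity (triangle inequality) of cyclic word length on a free group; I would either cite a standard reference or include the short combinatorial argument comparing cyclic reductions of $g$, $h$, and $gh$. Everything else — left-invariance, cancellativity, that isometric semigroup maps are the right morphisms — is routine.
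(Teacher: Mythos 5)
Your approach breaks down at the choice of metric on $\mathrm{FGp}(X)$: the cyclically reduced word length $\normsymb_{cyc}$ is \emph{not} subadditive, so $d(g,h) = \normsymb_{cyc}(g^{-1}h)$ is not a metric at all. (The paper itself flags this in Section~\ref{remarks-sec}: ``$\normsymb_{cyc}$ \dots is homogeneous, but not a semi-length function.'') Concretely, take $x = bab^{-1}$ and $y = b^{-1}ab$; then $\normsymb_{cyc}(x) = \normsymb_{cyc}(y) = 1$, but $xy = bab^{-2}ab$ is already cyclically reduced of length $6 > 1+1$. So the ``standard fact'' you propose to cite is false, and conjugation-invariance alone cannot be traded for the triangle inequality this cheaply. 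A second, independent error is the claim that the restriction to $\mathrm{FMon}(X)$ recovers the edit distance: the edit distance between positive words $v,w$ is $|v|+|w|$ minus twice the length of their longest common (possibly non-contiguous) \emph{subsequence}, not their longest common \emph{prefix}. For $v = ab$, $w = ba$ the edit distance is $2$ (delete $a$, reinsert it at the end), whereas $v^{-1}w = b^{-1}a^{-1}ba$ is cyclically reduced of length $4$; so even if $\normsymb_{cyc}$ were subadditive it would restrict to the wrong metric on the monoid.

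The construction that actually works (and is the one the paper uses) is the Watson--Crick length $\normsymb_{wc}$: for a word $u$ over $X \sqcup X^{-1}$, consider non-crossing matchings pairing letters with their inverses, and let $\normsymb_{wc}(u)$ be the minimal number of unmatched letters; equivalently, $\normsymb_{wc}(u)$ is the least $k$ such that $u$ is a product of $k$ conjugates of elements of $X \sqcup X^{-1}$. The latter description makes conjugation-invariance and subadditivity immediate, so $d_{FG}(u,u') \coloneqq \normsymb_{wc}(u^{-1}u')$ is a genuine bi-invariant metric. On your test case this gives $\normsymb_{wc}(b^{-1}a^{-1}ba) = 2$, because the two candidate arcs $(1,3)$ and $(2,4)$ cross and so at most one can be used --- matching the edit distance. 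In general, for positive $w,w'$ every matched pair in $w^{-1}w'$ must join a letter of $w^{-1}$ to a letter of $w'$, the non-crossing condition forces a nested ``rainbow'', and the matched letters form a common subsequence of $w$ and $w'$; minimizing the deficiency is therefore exactly maximizing the common subsequence, which identifies $d_{FG}$ with the edit distance on $\mathrm{FMon}(X)$. Your overall strategy --- find a conjugation-invariant length on the free group restricting to the edit distance --- is the right one, but the specific length you chose fails both required properties.
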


\begin{proof}
From above, $\mathrm{FMon}(X)$ is an object of
$\mathcal{C}_{\rm{bi-inv}}$; denote the metric by $d_{FM}$. One can check
that $d_{FM}(w,w')$ equals the difference between $\normsymb(w) +
\normsymb(w')$ and twice the length of the longest common (possibly
non-contiguous) substring in $w,w'$; here, $\normsymb$ denotes the length
of a word in the alphabet $X$.

We next claim $\mathrm{FGp}(X)$ is also an object of
$\mathcal{C}_{\rm{bi-inv}}$. Namely, for a word $w=x_1x_2\cdots x_m$ in
the free group, we consider \emph{non-crossing matchings} in $w$, i.e.,
sets $M$ of pairs of letters in $\{1,2,\dots m\}$ such that the following
hold.
\begin{itemize}
\item If $(i, j)\in M$, then $i< j$ and  $x_j=x_i^{-1}$.
\item If $(i, j), (k, l) \in M$, then either $(i, j) = (k, l)$ or
$i,j,k,l$ are distinct.
\item If $i<k<j<l$ and $(i, j)\in M$, then $(k, l)\notin M$.
\end{itemize}

Given a matching $M$ as above, consider the set $U=U(M)$ of indices $k$,
$1\leq k\leq m$ which are not part of a pair in $M$. Define the
\emph{deficiency} of the matching $M$ as the cardinality of the set
$U(M)$, and define the length $\normsymb_{wc}(w)$ of the word $w$ as the
infimum of the deficiency over all non-crossing matchings in $w$ (the
subscript in $\normsymb_{wc}$ stands for Watson--Crick). This length was
previously studied in \cite{gadgil}, including checking that it is
well-defined on all of $\mathrm{FGp}(X)$; moreover, $\normsymb_{wc}(w)$
equals the smallest number of conjugates of elements in $X \sqcup X^{-1}$
whose product is $w$. Now define $d_{FG}(w,w') \coloneqq
\normsymb_{wc}(w^{-1} w')$. It is easy to see that $\normsymb_{wc}$ is a
conjugacy invariant length function.

We claim that $d_{FG} \equiv d_{FM}$ on $\mathrm{FMon}(X)$, which proves
the result. It is easy to show that if two words in $\mathrm{FMon}(X)$
differ by a single insertion or deletion, then their distance in
$\mathrm{FGp}(X)$ is at most one, hence exactly one. In the other
direction, we claim that a non-crossing matching on $w^{-1}w'$, with $w$
and $w'$ containing only positive generators (in $X$), is just a
`rainbow', i.e.~nested arches with one end in $w^{-1}$ and the other in
$w'$. But then $d_{FG}(w,w')$ equals $\normsymb(w) + \normsymb(w')$ minus
twice the length of a common substring, which is maximal by the
minimality of the deficiency. Hence $d_{FG}(w,w') = d_{FM}(w,w')$,
completing the proof.
\end{proof}

{ Note that given weights $\normsymb(a)$ and $\normsymb(b)$,
there is a natural weighted version $\normsymb_{wc;a,b}$ where the
letters of $U$ as above are taken with these weights (symmetrically under
inversion). This corresponds to the weighted edit distance, with
different costs for editing different letters.}


\subsection{ Quasimorphisms and commutator lengths}

{ We now investigate potential applications of Proposition \ref{main} with $c > 0$.}
A \emph{quasi-morphism} on a group $G$ is a map $f : G \to \R$
whose \emph{defect} is bounded,
\[
	D(f) \coloneqq \sup_{x,y\in G} | f(xy) - f(x) - f(y) | < +\infty.
\]
{ Every quasi-morphism induces a pseudo-length function {(in particular semi-length function)}
by setting
\begin{equation}\label{quasimortonorm}
	\norm{ x } \coloneqq | f(x) | { + D(f),}
\end{equation}
where we can take $c = 2 D(f)$ as a bound on the homogeneity defect. In this case, Proposition~\ref{main} makes a rather
trivial statement: a homogeneous quasi-morphism is bounded on
commutators,
\[
	| f([x,y]) | \leq { 10} D(f).
\]
In fact, as observed in \cite[Lemme~1.1]{Br}, for homogeneous quasi-morphisms one can improve the
constant from $10$ to $3$, and a quasi-morphism can always be homogenized by replacing it by $\lim_{n \to \infty} f( x^n) / n$~\cite[p.~135]{Br}, which differs from the original $f$ by at most $D(f)$.}

Nevertheless, quasi-morphisms can be utilized to construct interesting
{ pseudo-length functions}, for example satisfying homogeneity on
specific commutators. The following quasi-morphism is due to
Brooks~\cite[Section~2]{Fu}.
For a given word $w$ in the free group $\free{2}$, written in reduced
form, let $f_w : \free{2} \to \R$ be the function which assigns to every
other $g\in \free{2}$, also written in reduced form, the maximum number
of times such that $w$ occurs in $g$ without overlaps, minus the
analogous number of times that $w^{-1}$ can maximally occur in $g$. Since
$f_w(w^n) = n f_w(w)$, { using~\eqref{quasimortonorm} results in a
pseudo-length function that grows linearly on the powers of $w$}. For example with $w$ being the
commutator of the generators of $\free{2}$, we see that although the
{ pseudo-length} function must be bounded on commutators by
{ Proposition~\ref{main}}, it can nevertheless grow linearly
on the powers of a fixed commutator.


Thus, there exist { examples of quasi-homogeneous semi-length functions on free groups} that are not induced by
norms. Nevertheless, we { will now} see that for a large class of groups, including
amenable groups and $G = SL(n, \Z)$ for $n\geq 3$, { even all quasi-homogeneous { semi-}length} functions are induced by norms on Banach spaces.
Further, the bound from { Proposition~\ref{main}} even in
the case of free groups is sharper than that obtained without using
homogeneity.

Recall that the \emph{commutator length} $\mathrm{cl}(g)$ of a word in
$[G, G]$ is the length $k$ of the shortest expression $g = [a_1,
b_1]\cdot[a_2, b_2]\cdots[a_k, b_k]$ of $g$ as a product of commutators.
The \emph{stable commutator length} is defined as $\lim_{n\to\infty}
\mathrm{cl}(g^n)/n$, where the limit exists by sub-additivity of the
function $n\mapsto \mathrm{cl}(g^n)$.

Then { Proposition~\ref{main}}, together with {$\norm{e} \leq c$ and
\eqref{rearrange} for $n$ a power of two,
\[
\norm{x} \leq \frac{\norm{x^n}}{n}  + \log_2(n)\, c,
\]}
easily imply the following estimates:

\begin{proposition}\label{commutatorbound}
{
Let $\normsymb$ and $c$ be as in
Proposition~\ref{main}. Then for
$x\in [G, G]$, $\norm{x}\leq (5\, \mathrm{cl}(x) + 1)c$
and $\norm{x}\leq (5\, \mathrm{scl}(x) + 1)c$.
}
\end{proposition}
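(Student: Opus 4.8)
The plan is to combine the commutator bound from Proposition~\ref{main} with the sub-additivity of the semi-length function along products of commutators, and then pass to powers to get the stable version. I would first record the two auxiliary inequalities that the excerpt already flags: $\norm{e} \leq c$ (which follows from \eqref{Enonneg2}, or simply from \eqref{double} and \eqref{ng} with $x=y=e$), and the rearranged lower-homogeneity bound $\norm{x} \leq \norm{x^n}/n + \log_2(n)\,c$ for $n$ a power of two.

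For the first estimate, suppose $x \in [G,G]$ with $\mathrm{cl}(x) = k$, so that $x = [a_1,b_1]\cdots[a_k,b_k]$. Applying the triangle inequality \eqref{ng} repeatedly gives
\[
\norm{x} \leq \sum_{i=1}^{k} \norm{[a_i,b_i]} + (k-1)\,c',
\]
where the $(k-1)$ extra $c$-terms arise because, unlike a genuine pseudo-length function, $\normsymb$ need not satisfy $\norm{e}=0$ — one should be slightly careful here, since \eqref{ng} applied to a product of $k$ terms introduces $k-1$ applications, and absorbing the stray $\norm{e}$ contributions uses $\norm{e}\le c$. By Proposition~\ref{main}, each $\norm{[a_i,b_i]} \leq 5c$, so $\norm{x} \leq 5kc + (k-1)c$. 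This is slightly worse than the claimed $(5k+1)c$, so the right bookkeeping is to apply \eqref{ng} to peel off one commutator at a time and observe that the accumulated error is $\le c$ once per split but that the final residual term is $\norm{e}\le c$ rather than $0$; tightening this is the one place where the constants must be handled with care to land exactly on $(5\,\mathrm{cl}(x)+1)c$. I expect that writing $x = [a_1,b_1]\cdot y$ with $\mathrm{cl}(y)\le k-1$, using $\norm{x}\le \norm{[a_1,b_1]}+\norm{y}\le 5c+\norm{y}$, and inducting on $k$ — with base case $\norm{e}\le c$ for $k=0$ — gives exactly $\norm{x}\le 5kc + c = (5\,\mathrm{cl}(x)+1)c$.

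For the stable version, I would apply the first estimate to the power $x^n$: since $\mathrm{cl}(x^n) \leq n\,\mathrm{cl}(x)$ and more importantly $\mathrm{cl}(x^n)/n \to \mathrm{scl}(x)$, we get $\norm{x^n} \leq (5\,\mathrm{cl}(x^n) + 1)c$. Now divide by $n$ and invoke $\norm{x} \leq \norm{x^n}/n + \log_2(n)\,c$ for $n$ a power of two:
\[
\norm{x} \leq \frac{(5\,\mathrm{cl}(x^n)+1)c}{n} + \log_2(n)\,c.
\]
The first term tends to $5\,\mathrm{scl}(x)\,c$ along powers of two, but the $\log_2(n)\,c$ term diverges, so a naive limit fails; the fix is to not send $n\to\infty$ but rather to optimize, or — cleaner — to note that $\mathrm{cl}(x^{2^N})$ is sub-additive so that $\mathrm{cl}(x^{2^{N}}) \le 2^{N}\mathrm{scl}(x) + o(2^N)$ is too weak, and instead to use the trick already used throughout the paper: apply the bound with $x$ replaced by $x$ itself but recognize $5\,\mathrm{cl}(x^n)+1 \le 5n\,\mathrm{cl}(x)+1$, which just recovers the first estimate. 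The genuinely correct route, and the one I'd write up, is: from $\norm{x^n} \le (5\,\mathrm{cl}(x^n)+1)c$ and $\norm{x^{nm}} \le m\norm{x^n}$ reversed via \eqref{double}, one has $\norm{x} \le \norm{x^n}/n + \log_2(n)c \le (5\,\mathrm{cl}(x^n)/n + 1/n + \log_2 n)c$ — still divergent — so in fact the paper must intend the subtler observation that $\mathrm{scl}$ controls $\norm{\cdot}$ directly: applying the first estimate to $x^{2^N}$ gives $\norm{x^{2^N}}/2^N \le (5\,\mathrm{cl}(x^{2^N})/2^N + 2^{-N})c \to 5\,\mathrm{scl}(x)\,c$, and then $\norm{x} \le \liminf_N \big(\norm{x^{2^N}}/2^N + N c\big)$ is the wrong inequality — the right one is to bound $\norm{x^{2^N}} \ge 2^N\norm{x} - Nc$, giving $\norm{x} \le \norm{x^{2^N}}/2^N + Nc/2^N \le (5\,\mathrm{cl}(x^{2^N})/2^N + 2^{-N} + N/2^N)c$, and now every error term except $5\,\mathrm{scl}(x)\,c$ vanishes as $N\to\infty$, yielding $\norm{x} \le 5\,\mathrm{scl}(x)\,c$; absorbing the final $\norm{e}\le c$ slack (needed when $\mathrm{scl}(x)$ is attained only in a limit) gives $(5\,\mathrm{scl}(x)+1)c$. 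The main obstacle is thus precisely this last point: arranging the $n\to\infty$ limit so that the $\log_2(n)\,c$ defect in \eqref{rearrange} is beaten by the $1/n$ decay of the $\mathrm{cl}(x^n)$ bound — which works because $\mathrm{cl}(x^n)$ grows linearly, hence $\mathrm{cl}(x^n)/n$ converges while $N c/2^N \to 0$ — and getting the additive constant to come out as exactly $+1$ rather than something larger.
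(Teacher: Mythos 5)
Your overall strategy is exactly the paper's (which gives essentially no proof beyond saying the estimates follow from Proposition~\ref{main}, $\norm{e}\le c$, and \eqref{rearrange}): bound each commutator by $5c$ and sum, then apply this to $x^n$ and pass to the limit along powers of two. Two points in your write-up need cleaning, though neither is fatal. First, the $(k-1)c'$ error terms you initially insert are spurious: \eqref{ng} is an exact triangle inequality with no additive defect, so for $k=\mathrm{cl}(x)\ge 1$ one gets directly $\norm{x}\le\sum_{i=1}^k\norm{[a_i,b_i]}\le 5kc\le(5k+1)c$ (using $c\ge 0$ from \eqref{Enonneg2}), with the case $k=0$ handled by $\norm{e}\le c$; your inductive reformulation also works, but the detour through a modified constant $c'$ is unnecessary.

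Second, for the stable bound your final route leans on $\norm{x^{2^N}}\ge 2^N\norm{x}-Nc$, which is not actually what \eqref{double} yields: iterating $\norm{x^2}\ge 2\norm{x}-c$ gives $\norm{x^{2^N}}\ge 2^N\norm{x}-(2^N-1)c$, i.e.\ exactly \eqref{rearrange}, $\norm{x}\le\norm{x^n}/n+c$ (the $\log_2(n)$ form quoted in the paper's prose is itself a slip, and in any case is not needed). The clean argument is simply
\[
\norm{x}\ \le\ \frac{\norm{x^n}}{n}+c\ \le\ \frac{(5\,\mathrm{cl}(x^n)+1)c}{n}+c\ \xrightarrow[n=2^N\to\infty]{}\ 5\,\mathrm{scl}(x)\,c+c,
\]
using that $\mathrm{cl}(x^n)/n\to\mathrm{scl}(x)$ by Fekete subadditivity (so the limit along powers of two is the full limit). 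In particular the $+1$ in the stable estimate comes from the $+c$ in \eqref{rearrange}, not from ``absorbing $\norm{e}\le c$ slack'' as you suggest; with that substitution your argument is correct and coincides with the intended one.
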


We say two { { semi-}length} functions $\normsymb_1,\normsymb_2: G
\to\R$ are \emph{equivalent} if $|\normsymb_1(x) - \normsymb_2(x)|$ is bounded in $x\in G$.

For a group $G$ which is perfect and so that the stable commutator length
vanishes on $G=[G, G]$, for example $SL(n,\Z)$ for $n \geq 3$, it is
immediate that any {homogeneous semi-}length function is bounded,
and hence equivalent to the trivial { semi-}length function
$\normsymb(g)\equiv 0$.

More generally, for groups $G$ for which the stable commutator length
vanishes on $[G,G]$, we can deduce an analogue of Theorem~\ref{class}.
Note that there are several interesting examples of such groups,
including solvable groups, and more generally, amenable groups.

\begin{theorem}\label{quasi}
Let $G$ be a group such that the stable commutator length vanishes on
$[G, G]$ and assume $\normsymb: G \to \R$ satisfies \eqref{ng} and \eqref{double}.
Then there exist a real Banach space $\mathbb{B} = (\mathbb{B},\| \, \|)$
and a group homomorphism $\phi : G \to \mathbb{B}$ such that $\normsymb$
is equivalent to $x\mapsto \| \phi(x) \|$.
\end{theorem}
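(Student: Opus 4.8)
The plan is to run the argument for Theorem~\ref{class} but everywhere replace exact identities by bounded-error estimates, using Proposition~\ref{commutatorbound} in place of the vanishing of $\normsymb$ on $[G,G]$. First I would observe that by Proposition~\ref{commutatorbound}, since $\mathrm{scl}$ vanishes on $[G,G]$, we have $\norm{x} \leq c$ for every $x \in [G,G]$; combined with \eqref{ng} this shows that $\normsymb$ is \emph{equivalent} (in the sense defined just above the theorem) to a semi-length function that factors through the abelianization $G_{\mathrm{ab}} = G/[G,G]$. Concretely, define $\bar{\normsymb}$ on $G_{\mathrm{ab}}$ by choosing any lift; using \eqref{ng} and the bound $\norm{x}\leq c$ on $[G,G]$ one checks that $\bar{\normsymb}$ is well-defined up to an error of $O(c)$, satisfies the triangle inequality up to a bounded additive constant, and satisfies \eqref{double} up to a bounded constant. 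By Remark~\ref{quasilength}, after adding a suitable constant we may assume $\bar{\normsymb}$ is an honest semi-length function on the abelian group $G_{\mathrm{ab}}$ satisfying \eqref{double} with some constant $c'$, and $\normsymb$ is equivalent to $\bar{\normsymb}$ composed with $G \to G_{\mathrm{ab}}$.

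Having reduced to the abelian case, write $A = G_{\mathrm{ab}}$ additively. Next I would kill torsion: if $nx = 0$ in $A$, then $\norm{nx} = \norm{0} \leq c'$ while \eqref{rearrange} applied along powers of two dividing a large power of $n$ gives $\norm{x} \leq \norm{nx}/n + O(c') \to O(c')$, so $\normsymb$ is $O(c')$-small on the torsion subgroup $A_{\mathrm{tors}}$; hence $\normsymb$ is equivalent to a semi-length function pulled back from the torsion-free abelian group $A/A_{\mathrm{tors}}$, and we may pass to the quotient. Now on the torsion-free abelian group $A_0 = A/A_{\mathrm{tors}}$ we have a semi-length function $\normsymb$ with $\norm{x^2} \geq 2\norm{x} - c'$; I would define the homogenization
\[
\norm{x}_{\mathrm{hom}} \coloneqq \lim_{n\to\infty} \frac{\norm{2^n x}}{2^n},
\]
which exists because \eqref{rearrange} and \eqref{ng-iter} show $\frac{\norm{2^n x}}{2^n}$ is within $O(c')$ of its own tail and is monotone-ish; more precisely $\norm{2^{n+1}x}/2^{n+1} \geq \norm{2^n x}/2^n - c'/2^{n+1}$ and $\leq \norm{2^n x}/2^n$, so the limit exists and $|\norm{x} - \norm{x}_{\mathrm{hom}}| \leq c'$. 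One then verifies directly that $\normsymb_{\mathrm{hom}}$ is a genuine \emph{homogeneous} pseudo-length function on $A_0$ (subadditivity passes to the limit; homogeneity $\norm{2x}_{\mathrm{hom}} = 2\norm{x}_{\mathrm{hom}}$ is immediate from the definition, and extends to all $n$ by the standard interpolation using \eqref{ng-iter}; symmetry and $\norm{0}_{\mathrm{hom}}=0$ follow similarly, using that on a torsion-free abelian group $-x$ behaves well).

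Finally I would apply Theorem~\ref{class} to the homogeneous pseudo-length function $\normsymb_{\mathrm{hom}}$ on $A_0$: there is a real Banach space $\mathbb{B}$ and a homomorphism $\psi: A_0 \to \mathbb{B}$ with $\norm{x}_{\mathrm{hom}} = \|\psi(x)\|$. Composing with the canonical maps $G \to A = G_{\mathrm{ab}} \to A_0$ gives the desired homomorphism $\phi: G \to \mathbb{B}$, and chaining the equivalences established in the previous steps ($\normsymb \sim \bar{\normsymb} \sim$ pullback from $A_0 \sim \normsymb_{\mathrm{hom}}$-pullback $= \|\phi(\cdot)\|$) shows $\normsymb$ is equivalent to $x \mapsto \|\phi(x)\|$. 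The main obstacle I anticipate is the bookkeeping in the first paragraph: verifying carefully that the quotient semi-length functions on $G_{\mathrm{ab}}$ and then $A_0$ are well-defined up to bounded error and still satisfy a \eqref{double}-type inequality with a controlled constant, so that Remark~\ref{quasilength} applies and the constants do not blow up through the chain of reductions; everything after the reduction to a torsion-free abelian group is routine, being essentially the construction of a homogeneous quasi-morphism's homogenization followed by an appeal to Theorem~\ref{class}.
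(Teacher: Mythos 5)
Your proposal is essentially the paper's own proof: use Proposition~\ref{commutatorbound} together with the vanishing of scl on $[G,G]$ to show $\normsymb$ varies by at most $c$ on cosets of $[G,G]$, push down along a section to a semi-length function on $G_{\mathrm{ab}}$ satisfying \eqref{double} (absorbing the additive errors via Remark~\ref{quasilength}), homogenize along powers of two, and finish by applying Theorem~\ref{class}; your extra step of killing torsion before homogenizing is harmless but redundant, since the proof of Theorem~\ref{class} already quotients out torsion. The one point you wave at --- that symmetry $\normsymb_{\mathrm{hom}}(-x)=\normsymb_{\mathrm{hom}}(x)$ of the homogenization ``follows similarly,'' which in fact does not follow from \eqref{ng} and \eqref{double} alone (consider $\norm{n}=n$ on $\Z$) --- is glossed over at exactly the same spot in the paper, which likewise applies Theorem~\ref{class} without verifying that the homogenized function is a genuine (nonnegative, symmetric) pseudo-length function.
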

\begin{remark}
  {As in Remark~\ref{quasilength}, we can replace \eqref{ng} by the \emph{a priori}
  weaker condition that $\norm{xy} \leq \norm{x} + \norm{y} + k$ for all $x, y\in G$ with $k$ fixed.}
\end{remark}
\begin{proof}

Let $\ab: G \to G_{\ab}=  G / [G, G]$ be the abelianization homomorphism.
We first construct a homogeneous { semi-}length function $\bar{\normsymb}$
on { $G_{\ab}$} so that $\normsymb$ is equivalent to $\bar{\normsymb}\circ\ab$.
Let $\eta : G_{\ab} \to G$ be a section of $\ab$ { and let $\bar{\normsymb}_0(x) := \norm{\eta(x)} + c$}. We show that
$\bar{\normsymb}_0$ is a { { semi-}length} function. The required
$\bar{\normsymb}$ will be obtained by homogenizing $\bar{\normsymb}_0$.

By Proposition~\ref{commutatorbound}, as the stable commutator length
vanishes on $[G, G]$, it follows that  for $x, y\in G$, if $\ab(x)
= \ab(y)$, then $|\norm{x} - \norm{y}| \leq c$. Now, for
$\alpha, \beta \in G_{\ab}$, $\ab(\eta(\alpha\beta)) =
\ab(\eta(\alpha)\eta(\beta))$, hence
\[
|\norm{\eta(\alpha\beta)} - \norm{\eta(\alpha)\eta(\beta))}| \leq c.
\]
This together with { the triangle inequality~\eqref{ng} gives
\[
	\bar{\normsymb}_0(\alpha\beta) \leq \bar{\normsymb}_0(\alpha) + \bar{\normsymb}_0(\beta) + c,
\]
while using \eqref{double} instead} gives the required lower bound for $\bar{\normsymb}_0(\alpha^2)$.

Next, for $x\in G$, as $\ab(\eta(\ab(x))) = \ab(x)$, we have
{ $|\normsymb(x) - (\bar{\normsymb}_0 \circ \ab)(x)| \leq c$}. Thus
$\normsymb$ is equivalent to $\bar{\normsymb}_0 \circ \ab$.

Since $(\alpha \beta)^n = \alpha^n \beta^n$ in $G_{\ab}$, we also have
$\bar{\normsymb}_0((\alpha\beta)^n) \leq \bar{\normsymb}_0(\alpha^n) +
\bar{\normsymb}_0(\beta^n) + { c}$. We deduce that the homogenization
$\bar{\normsymb}$ of $\bar{\normsymb}_0$ is a { semi-}length
function on $G_{\ab}$, which is equivalent to $\bar{\normsymb}_0$ { due to the bounds~\eqref{ng-iter} and~\eqref{rearrange}, applied to $\bar{\normsymb}_0$}. Therefore
also $\normsymb$ is equivalent to $\bar{\normsymb}\circ \ab$ on $G$.

The claim now follows upon applying Theorem~\ref{class} to
$(G_{\ab},\bar{\normsymb})$ and taking $\phi$ to be the composition $G
\to G_{\ab} \to \mathbb{B}$.
\end{proof}

The following examples of length functions on the free group show
that some hypotheses are needed to get bounds as strong as those of the
Theorem (naturally the stable commutator length does not vanish in the
free group).
For example, consider the word $[a^k, b^m]$ in the free group $\free{2}$,
generated by $a$ and $b$, for some integers $k$ and $m$.
\begin{itemize}
\item The norm of such an element with respect  to the word metric is
$2(|k| + |m|)$.

\item If we have a length function $\normsymb$ which is symmetric and
conjugation-invariant, but not necessarily homogeneous, then we have the
bound $\norm{[a^k, b^m]} \leq 2 \min (|k|\, \norm{a}, |m|\, \norm{b})$.
Furthermore, { the $\normsymb_{wc;a,b}$ from above} are conjugation-invariant length
functions for which these inequalities hold with equality.

Further, $\norm{[a^k, b^m]} \geq 2 \min (|k|\, \norm{a}, |m|\, \norm{b})$
as, for any matching $M$ for $w=[a^k, b^m]$, if some pair $(i, j)$
corresponds to letters $a$ and $a^{-1}$, then no pair corresponds to
letters $b$ and $b^{-1}$ and conversely. Further, it is easy to find a
matching for $w$ for which the deficiency is $\min (|k|\, \norm{a}, |m|\,
\norm{b})$.  On the other hand, $\normsymb_{wc;a,b}$ is not homogeneous; for
instance, $\norm{[a,b]} = 2$ and $\norm{[a,b]^3} = 4$.  Similarly, we
have $\norm{[a^k,b^k]}=2|k|$ and $\norm{[a^k,b^k]^3} \leq 4|k|$, which
demonstrates that $2\norm{x} - \norm{x^2}$ is unbounded (as must be the
case, according to \eqref{9bound}).

\item On the other hand, the function $\normsymb_{cyc}$ associating to each
word the length of its cyclically reduced form is homogeneous, but not a
{ { semi-}length function}. For this we have $\normsymb_{cyc}([a^k, b^m]) = 2(|k| +
|m|)$.
\end{itemize}

Observe that all of the bounds on $\norm{[a^k, b^m]}$ here become
unbounded as $k,m \to \infty$.  This should be compared with
{ Proposition~\ref{main}, which establishes a bound
$\norm{[a^k,b^m]} \leq 5c$} that is uniform in $k$ and $m$ for any
function $\normsymb$ satisfying the hypotheses of that proposition.

\subsection{Finite balls in free groups}

From Proposition~\ref{main} and a standard compactness argument, we can
establish the following local version of the theorem.

\begin{theorem}
For any $\eps>0$ there exists $R \geq 4$ with the following property: if
$a,b$ are two elements of a group $G$, $B_{a,b}(R) \subset G$ is the
collection of all words in $a,b,a^{-1},b^{-1}$ of length at most $R$
\textup{$($}so in particular $B_{a,b}(R)$ contains $[a,b]$\textup{$)$},
and the map $\normsymb: B_{a,b}(R) \to [0,+\infty)$ is a ``local
{ semi}-length function'' which obeys the triangle inequality
\begin{equation}\label{tri}
 \norm{ xy } \leq \norm{x} + \norm{y}
\end{equation}
whenever $x,y,xy \in B_{a,b}(R)$, with equality when $x=y$, then one has
\[
\norm{ [a,b] } \leq \eps ( \norm{a} + \norm{b} ).
\]
\end{theorem}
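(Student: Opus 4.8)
The plan is to run a compactness/contradiction argument that reduces the ``local'' statement on the ball $B_{a,b}(R)$ to the global Proposition~\ref{main}. Suppose the conclusion fails for some $\eps>0$. Then for every $R\geq 4$ — in particular for $R=R_j=j$ with $j\to\infty$ — there exist a group $G_j$, elements $a_j,b_j\in G_j$, and a local semi-length function $\normsymb_j: B_{a_j,b_j}(R_j)\to[0,+\infty)$ obeying \eqref{tri} (with equality when $x=y$) but with $\norm{[a_j,b_j]} > \eps(\norm{a_j}+\norm{b_j})$. By homogeneity of $\normsymb_j$ under rescaling — replacing $\normsymb_j$ by $\normsymb_j/(\norm{a_j}+\norm{b_j})$, which is legitimate since the ball and all the hypotheses are invariant under scaling the length function by a positive constant (we may assume $\norm{a_j}+\norm{b_j}>0$, as otherwise $\norm{[a_j,b_j]}=0$ by the triangle inequality and there is nothing to prove) — we may normalize so that $\norm{a_j}+\norm{b_j}=1$, hence $\norm{[a_j,b_j]}>\eps$ for all $j$.

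Next I would pass to a limiting object living on the free group $\free{2}$ on generators $a,b$. The point is that $B_{a_j,b_j}(R_j)$ is the image, under the canonical map $\free{2}\to G_j$ sending $a\mapsto a_j$, $b\mapsto b_j$, of the ball $B(R_j)\subset\free{2}$ of reduced words of length $\leq R_j$. Pulling back, define $\tilde\normsymb_j: B(R_j)\to[0,+\infty)$ by $\tilde\normsymb_j(w) \coloneqq \norm{\bar w}$ where $\bar w\in G_j$ is the image of $w$; this still satisfies the local triangle inequality on $B(R_j)$ with equality for $x=y$, since those relations are inherited along the homomorphism, and we still have $\tilde\normsymb_j(a)+\tilde\normsymb_j(b)=1$ (using that $\norm{a_j}\le\tilde\normsymb_j(a)$ and conversely — actually equality — because the word-length-$1$ elements map to $a_j^{\pm1},b_j^{\pm1}$), and $\tilde\normsymb_j([a,b])>\eps$. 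Now for each \emph{fixed} reduced word $w\in\free{2}$, the values $\tilde\normsymb_j(w)$ are defined for all large $j$ and are bounded: by the iterated triangle inequality, $\tilde\normsymb_j(w)\leq |w|\cdot\max(\tilde\normsymb_j(a),\tilde\normsymb_j(b))\leq|w|$. By a diagonal argument (Tychonoff, or iterated extraction of subsequences over a countable enumeration of $\free{2}$), we may pass to a subsequence along which $\tilde\normsymb_j(w)$ converges to a limit $\normsymb_\infty(w)\in[0,+\infty)$ for every $w\in\free{2}$.

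The limit $\normsymb_\infty:\free{2}\to[0,+\infty)$ then satisfies, \emph{globally}, the triangle inequality $\norm{xy}\le\norm{x}+\norm{y}$ for all $x,y\in\free{2}$ (each such inequality involves only three fixed words, so holds for all large $j$ once $R_j$ exceeds their lengths, and passes to the limit), and likewise $\norm{x^2}=2\norm{x}$ for all $x$ (again each instance is eventually within the ball). Thus $\normsymb_\infty$ is a homogeneous semi-length function on $\free{2}$ — in the sense of the paper, a semi-length function satisfying \eqref{ng} and \eqref{double} with $c=0$. Applying Proposition~\ref{main} with $c=0$ (equivalently, the argument in the proof of Theorem~\ref{class}), we get $\normsymb_\infty([a,b])=0$. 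But $\normsymb_\infty([a,b])=\lim_j\tilde\normsymb_j([a,b])\geq\eps>0$, a contradiction. This establishes the existence of the required $R$.

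The main obstacle is purely bookkeeping: making sure that \emph{every} inequality we wish to pass to the limit — the triangle inequalities, the doubling equalities, and in fact every intermediate inequality invoked in the proof of Proposition~\ref{main} for the words built from $a$ and $b$ — involves only finitely many words of $\free{2}$, so that it holds for all sufficiently large $j$ (once $R_j$ is large enough to contain all those words) and therefore survives the limit. Since Proposition~\ref{main}'s proof proceeds word-by-word with each step touching only boundedly many group elements (relative to $x=a$, $y=b$), this is automatic, but one must phrase the compactness cleanly — the cleanest route is simply to produce $\normsymb_\infty$ first and then quote Proposition~\ref{main} as a black box on $\free{2}$, rather than trying to track the argument ball-by-ball. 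One should also take a moment to handle the degenerate normalization case $\norm{a}+\norm{b}=0$ separately, as noted above, and to check that the lower bound $\tilde\normsymb_j(a),\tilde\normsymb_j(b)\le 1$ together with $\tilde\normsymb_j(a)+\tilde\normsymb_j(b)=1$ keeps the relevant quantities in a compact set.
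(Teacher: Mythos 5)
Your argument is essentially the paper's own proof: pull back to the free group $\free{2}$, normalize $\norm{a}+\norm{b}=1$, extract a pointwise limit along a sequence $R_j\to\infty$ of putative counterexamples, and contradict Proposition~\ref{main} with $c=0$. One step deserves scrutiny (the paper elides it too, by simply invoking Arzel\`a--Ascoli): the pointwise bound you use to justify the diagonal extraction, $\tilde\normsymb_j(w)\leq |w|\max(\tilde\normsymb_j(a),\tilde\normsymb_j(b))$, is not what the triangle inequality gives, since a reduced word $w$ generally contains the letters $a^{-1},b^{-1}$, and the stated hypotheses (subadditivity plus doubling, no symmetry) place no upper bound on $\norm{a^{-1}},\norm{b^{-1}}$ in terms of $\norm{a},\norm{b}$. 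For instance, pushing forward along $\free{2}\to\Z$, $a\mapsto 1$, $b\mapsto 0$, the function $n\mapsto \alpha\max(n,0)+\beta\max(-n,0)$ is subadditive, nonnegative, and satisfies $\norm{x^2}=2\norm{x}$, with $\norm{a}=\alpha$ fixed and $\norm{a^{-1}}=\beta$ arbitrarily large; so equiboundedness at $a^{-1}$ genuinely fails without symmetry. The same issue affects your dismissal of the degenerate case $\norm{a}+\norm{b}=0$, where bounding $\norm{[a,b]}$ by the triangle inequality again requires control of $\norm{a^{-1}},\norm{b^{-1}}$. The fix is to read the hypothesis as the paper's proof implicitly does -- the local function is a local \emph{pseudo}-length function, hence symmetric -- after which your bound becomes $\tilde\normsymb_j(w)\leq|w|$ and both your argument and the paper's go through verbatim. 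Apart from this shared caveat, your write-up is a correct and more detailed rendering of the paper's compactness argument.
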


\begin{proof}
By pulling back to the free group $\free{2}$ generated by $a$ and $b$, we
may assume without loss of generality that $G = \free{2}$.  Without loss
of generality we may also normalize $\norm{a}+\norm{b}=1$.  If the claim
failed, then one could find a sequence $R_n \to \infty$ and local
pseudo-length functions $\normsymb_n: B_{a,b}(R_n) \to [0,+\infty)$ such
that $\normn{a}+\normn{b} = 1$, but that $\normn{[a,b]} \geq \eps$.  By
the Arzela--Ascoli theorem, we can pass to a subsequence that converges
pointwise to a homogeneous pseudo-length function $\normsymb: G \to
[0,+\infty)$ such that $\norm{[a,b]} \geq \eps$, which contradicts
Proposition~\ref{main}.
\end{proof}

\begin{remark}
By carefully refining the arguments in the previous section, choosing $n$
to be various small powers of $R$ instead of sending $n$ to infinity, one
can extract an explicit value of $R$ of the form $R = C \eps^{-A}$ for
some absolute constants $C, A > 0$; we leave the details to the
interested reader.
\end{remark}

On the other hand, for any finite $R$ one can construct local length
functions $\normsymb: B(0,R) \to [0,+\infty)$ such that $\norm{x} > 0$
for all $x \in B(0,R) \setminus \{e\}$.  One construction is as follows.
Any two matrices $U_a, U_b\in SO(3)$ define a representation $x \mapsto
U_x$ of the free group $\free{2}$ in the obvious fashion.  Every $U_x$ is
then a rotation around some axis in $\R^3$ by some angle $0 \leq \theta_x
\leq \pi$ in one of the two directions around that axis; if $U_a$ and
$U_b$ are sufficiently close to the identity, then the angle $\theta_x$
is at most $\pi/2$ for all $x \in B(0,R)$. We set $\norm{x} \coloneqq
\theta_x$ for $x \in B(0,R)$.  Also, if $U_a,U_b$ are chosen generically,
the representation is faithful, as follows from the dominance of word
maps on simple Lie groups such as $SO(3)$, see \cite{borel}. Hence
$\norm{x}>0$ for any non-identity $x$.  From the triangle inequality for
angles we thus have \eqref{tri} whenever $x,y,xy \in B(0,R)$, with
equality when $x=y$.  Note that as one sends $R \to \infty$, the local
length functions constructed here converge to zero pointwise, so in the
limit we do not get any counterexample to the main theorem.



\end{document}